\setlist[enumerate]{itemsep=0pt, topsep=0pt, parsep=0pt}
\numberwithin{equation}{section}
\newtheorem{theorem}{Theorem}[section]
\newtheorem{lemma}{Lemma}[section]
\newtheorem{proposition}{Proposition}[section]
\newtheorem{definition}{Definition}[section]
\newtheorem{example}{Example}[section]
\theoremstyle{remark}
\newtheorem{remark}{Remark}[section]
\date{}
\DeclareMathOperator{\Div}{div}
\DeclareMathOperator{\grad}{grad}
\DeclareMathOperator{\Ric}{Ric}
\title{\textbf{On the geometry of Riemannian warped product  maps}}
\author{Jyoti Yadav, Harmandeep Kaur and Gauree Shanker\thanks{corresponding author, Email: gauree.shanker@cup.edu.in}	 }
\begin{document}
\maketitle
\begin{abstract}
In this paper, we begin by introducing Clairaut Riemannian warped prod-
uct maps and establish the condition under which a regular curve becomes a geodesic. We obtain the conditions for a Riemannian warped product map to be Clairaut Riemannian warped product map followed by Ricci curvature. Further, we study the Ricci soliton structure on a Riemannian warped product manifold using curvature tensor. We examine the Bochner type formulae for Clairaut Riemannian warped product map and construct a supporting example.
Furthermore, we extend the study to introduce and examine some geometric aspects of conformal Riemannian warped product maps. We derive the integral formula for scalar curvature of conformal Riemannian warped product map. Finally, we construct an example for conformal Riemannian warped product map.

  		
	\end{abstract}
	\noindent\textbf{Mathematics Subject Classification}. 53B20, 53C22, 53C42, 53C55\\
	\textbf{Keywords}. Riemannian map, Clairaut Riemannian map, conformal Riemannian map,  Riemannian warped product submersion, Riemannian warped product map.
	
\section{Introduction}
Warped product manifolds, introduced by Bishop and O'Neill \cite{BishopNeill}, as a genera lization of product manifold, provide a framework for constructing new examples of manifolds with negative curvature by defining suitable convex functions on them. Schwarzschild and Robertson-Walker models \cite{Neill} are the prominent examples of warped products. In Riemannian geometry, de Rham theorem states that Riemannian manifold can be locally decomposed into product manifold. Subsequently, Moore \cite{J17} demonstrated the sufficient conditions for an isometric immersion into an Euclidean space to decompose into a product immersion. \\
The definition of warped product immersion is as follows: Let $\phi_i: M_i\rightarrow N_i$ be isometric immersions, where $i= 1,\ldots,k.$ Assume $M_1\times_{f_1} M_2\times_{f_2},\ldots,\times_{f_{k-1}}M_k$ and $N_1\times_{\rho_1} N_2\times_{\rho_2},\ldots,\times \rho_{k-1}N_k$ be warped product manifolds, where $\rho_i:N_i\rightarrow\mathbb{R}^+$ and $ f_i=\rho_i\circ\phi_i:M_i\rightarrow\mathbb{R}^+$ for $i=1,\ldots,k-1,$ are smooth functions. Then, the smooth map $\phi:M_1\times_{f_1} M_2\times_{f_2},\ldots,\times_{f_{k-1}}M_k\rightarrow N_1\times_{\rho_1} N_2\times_{\rho_2},\ldots,\times_{\rho_{k-1}}N_k,$ defined by $\phi(p_1,\ldots,p_k)= (\phi_1(p_1),\ldots,\phi_k(p_k))$ is an isometric immersion. For further study of warped product isometric immersion, we refer \cite{Chen, J9}.\\	
A crucial problem in Riemannian geometry is to construct Riemannian manifolds with positive or non-negative sectional curvature. Numerous examples of Einstein manifolds have been constructed through the use of Riemannian submersions. The concept of Riemannian submersion was introduced by Gray and O'Neill \cite{Gray,J10}. For a more comprehensive study of Riemannian submersions, we refer \cite{J13}.\\
Erken et. al \cite{J7} have studied Riemannian submersions between warped product manifolds. They have calculated fundamental tensor fields with various cases of vector fields. In this context, Erken et al. \cite{ErkenSiddiqui} investigated certain inequalities involving Riemannian warped product submersions with respect to the vertical Casorati curvatures. For further studies on Riemannian warped product submersions, see \cite{J7}.\\ As a generalization of isometric immersions and Riemannian submersions, the concept of a Riemannian map was introduced by Fischer \cite{Fischer}. A notable feature of Riemannian maps is that they satisfy a generalized Eikonal equation, which serves as a conceptual link between physics and differential geometry. For a detailed study of Riemannian maps, we refer \cite{sahin book}. The geometry of Riemannian warped product maps has been further explored by Meena et al. \cite{J14}.\\  
In addition, the theory of conformal Riemannian maps, which generalizes Riemannian maps, was introduced by Şahin. In this framework, the differential of the map is a conformal isometry when restricted to the horizontal distribution. Tojeiro \cite{Tojeiro} have studied the geometry of conformal immersions of warped product manifolds. In this paper, we aim to introduce the concept of conformal Riemannian warped product maps and to explore the geometric properties of warped product manifolds.\\ In 1972, Bishop \cite{Bishop} generalized the idea of Clairaut's theorem in surface theory to higher dimensional surfaces and introduced Clairaut submersion. Later, in 2017,  \c{S}ahin generalized Bishop's idea and introduced Clairaut Riemannian map.\\
The idea of Ricci soliton is originated from Hamilton's work \cite{Hamilton}. It is the generalization of Einstein metric. Perelman used the idea of Ricci soliton to prove the important conjecture, called Poincar\'e conjecture. The study of Ricci soliton is a fascinating field in mathematics and physics. A Riemannian manifold $(M, g)$ admits a Ricci soliton structure if there exist a vector field $\xi,$ called potential vector field such that
\begin{equation}
\frac{1}{2}L_{\xi}g+Ric+\lambda g = 0.
\end{equation} 
The behavior of Ricci soliton depends on the nature of $\lambda.$ If $\lambda$ is positive, zero or negative then Ricci soliton will be expanding,  steady or shrinking, respectively. If $\lambda$ is a non constant function, then it becomes almost Ricci soliton.\\
In this paper, we introduce the concepts of Clairaut Riemannian warped product maps and conformal Riemannian warped product maps, and establish several significant results related to their geometric properties. In section 2, we outline some important definitions and results, required for current research work. In section 3, we determine the conditions for a regular curve to be a geodesic and derive the necessary and sufficient conditions for a Riemannian warped product map to be a Clairaut Riemannian warped product map. We compute the Ricci curvature tensor associated with Clairaut Riemannian warped product maps. Additionally, we explore the Ricci soliton structure on warped product manifolds. we then formulate Bochner-type formulae for these maps. To complement the theoretical findings, we conclude the section with a concrete example. In Section 4, we define the concept of a conformal Riemannian warped product map, investigate the geometric properties of warped product manifolds under such mappings, and provide an illustrative example to demonstrate the theory.
\section{Preliminaries}
Here, we recall some definitions and results which are relevant to this paper.
\subsection{Riemannian map and O'Neill tensor}
Let $\phi_{1}:(M^{m_1}_1, g_{M_1})\rightarrow(N^{n_1}_1, g_{N_1})$ be a smooth map between two Riemannian manifolds, where $0< \text{rank}~ \phi_{1}\leq \min\{m_1, n_1\}.$  For given $p_1\in M_1, \mathcal{V}_{p_1} = \ker\phi_{1*p_1}$ denotes the vertical distribution, where $\mathcal{V}:p_1 \mapsto\mathcal{V}_{p_1}$ and orthogonal compliment of  $\mathcal{V}_{p_1}$ is $\mathcal{H}_{p_1} = (\ker \phi_{1*p_1})^\perp,$ where $\mathcal{H}:p_1 \mapsto \mathcal{H}_{p_1}.$ Therefore, the tangent space $T_{p_1}M_1$ can be written as 
$$T_{p_1}M_1 =  \ker\phi_{1*p_1}\oplus (\ker\phi_{1*p_1})^\perp.$$
Moreover, range of $\phi_{1*p_1}$ is denoted by $\text{range} ~\phi_{1*p_1}$ and its orthogonal compliment is given by $(\text{range}~ \phi_{1*p_1})^\perp.$ Thus tangent space at $\phi_{1}(p_1) = q_1,$
$T_{q_1}N_1$ is the direct sum of $\text{range}~ \phi_{1*p_1}$ and $(\text{range}~ \phi_{1*p_1})^\perp,$ i.e., $T_{q_1}N_1 = \text{range}~ \phi_{1*p_1}\oplus (\text{range}~ \phi_{1*p_1})^\perp.$
\begin{definition}\cite{sahin book}
A smooth map $\phi_{1}$ is said to be a Riemannian map at $p_1\in M_1$ if the horizontal restriction
$\phi_{1p_1}^{h}:(\ker\phi_{1*p_1})^\perp\rightarrow (\text{range}~\phi_{1*p_1})$ is a linear isometry between the inner product spaces $\big((\ker\phi_{1*p_1})^\perp, g_{M_1}{(p_1)}|_{ (\ker\phi_{1*p_1})^\perp}\big)$ and $\big((\text{range}~\phi_{1*p_1}, g_{N_1}(q_1)|_{(\text{range}~\phi_{1*p})}\big),$ i.e.,
$$g_{N_1}(\phi_{1*}X, \phi_{2*}Y) = g_{M_1}(X, Y).$$
\end{definition}
O'Neill \cite{J10} defined fundamental tensor fields $T_1$
and $A_1$ by
\begin{equation}\label{neill T}
T^1_{X_1}{Y_1} =  \mathcal{H}_1\nabla^M_{\mathcal{V}_1X_1}\mathcal{V}_1Y_1 +\mathcal{V}_1\nabla^{M_1}_{\mathcal{V}_1X_1}\mathcal{H}_1Y_1,\\
\end{equation}
\begin{equation}\label{neill A}
A^1_{X_1}{Y_1} = \mathcal{H}_1\nabla^{M_1}_{\mathcal{H}_1X_1}\mathcal{V}_1Y_1 + \mathcal{V}_1\nabla^{M_1}_{\mathcal{H}_1X_1}\mathcal{H}_1Y_1,\\
\end{equation}
where $X_1, Y_1\in\Gamma(TM_1), \nabla^{M_1}$ is the Levi-Civita connection of $g_{M_1}$ and $\mathcal{V}_1, \mathcal{H}_1$ denote the projections to vertical subbundle and horizontal subbundle,
respectively. For any $X_1\in \Gamma(TM_1)$, $T_{X_1}$ and $A_{X_1}$ are skew-symmetric operators on $(\Gamma(TM_1), g_{M_1})$ reversing the horizontal and the vertical distributions. One can check easily that $T^1$ is vertical, i.e., $T^1_{X_1} = T^1_{\mathcal{V}_1 X_1}$, and $A^1$ is horizontal, i.e., $A^1 = A^1_{\mathcal{H}_1X_1}$. We observe that the tensor field $T^1$ is symmetric for vertical vector fields, 
and tensor field $A^1$ is anti-symmetric for horizontal vector fields. 
Using  equation  \eqref{neill T} and \eqref{neill A}, we have the following Lemma.\\
\begin{lemma} \cite{J10}\label{Neill tensor} Let  $X, Y\in\Gamma (\ker\phi_{*})^\perp$ and $V,W \in \Gamma(\ker\phi_{*}).$ Then
\begin{equation*}
\nabla_{V}W = T_{V}W + \hat{\nabla}_{V}W,
\end{equation*}
\begin{equation*}
\nabla_{V}X = \mathcal{H}\nabla_{V}X + T_{V}X,
\end{equation*}
\begin{equation*}
\nabla_{X}V = A_{X}V +\mathcal{V}\nabla_{X}V,
\end{equation*}
\begin{equation*}
\nabla_{X}Y = \mathcal{H}\nabla_{X}Y + A_{X}Y.
\end{equation*}
If $X$ is a basic vector field, then $\mathcal{H}\nabla_{V}X = A_{X}V.$
\end{lemma}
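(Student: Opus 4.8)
The plan is to verify the five identities one at a time, in each case splitting the Levi-Civita connection $\nabla := \nabla^{M_1}$ of $g_{M_1}$ into its horizontal and vertical parts, $\nabla_{Z}W = \mathcal{H}\nabla_{Z}W + \mathcal{V}\nabla_{Z}W$ for $Z,W\in\Gamma(TM_1)$, and comparing the resulting summands with the defining formulae \eqref{neill T} and \eqref{neill A} of $T$ and $A$. Here $\hat\nabla_{V}W$ is to be read as $\mathcal{V}\nabla_{V}W$, the connection induced on the fibres. I will use freely that $\mathcal{V}V = V$ and $\mathcal{H}V = 0$ for $V\in\Gamma(\ker\phi_*)$, and $\mathcal{H}X = X$ and $\mathcal{V}X = 0$ for $X\in\Gamma(\ker\phi_*)^\perp$.

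For the first equation take $V,W\in\Gamma(\ker\phi_*)$ in \eqref{neill T}: since $\mathcal{H}W = 0$ the second summand vanishes, so $T_{V}W = \mathcal{H}\nabla_{V}W$; adding $\mathcal{V}\nabla_{V}W = \hat\nabla_{V}W$ gives $\nabla_{V}W = T_{V}W + \hat\nabla_{V}W$. For the second equation take $V$ vertical and $X$ horizontal in \eqref{neill T}: the first summand dies because $\mathcal{V}X = 0$, leaving $T_{V}X = \mathcal{V}\nabla_{V}X$, whence $\nabla_{V}X = \mathcal{H}\nabla_{V}X + T_{V}X$. The third and fourth equations follow by the same bookkeeping applied to \eqref{neill A}: for $X$ horizontal, $V$ vertical, the second summand drops since $\mathcal{H}V = 0$, giving $A_{X}V = \mathcal{H}\nabla_{X}V$ and hence $\nabla_{X}V = A_{X}V + \mathcal{V}\nabla_{X}V$; for $X,Y$ horizontal, the first summand drops since $\mathcal{V}Y = 0$, giving $A_{X}Y = \mathcal{V}\nabla_{X}Y$ and hence $\nabla_{X}Y = \mathcal{H}\nabla_{X}Y + A_{X}Y$.

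For the final assertion I would use that $\nabla$ is torsion free, so $\nabla_{V}X - \nabla_{X}V = [V,X]$ for $V\in\Gamma(\ker\phi_*)$. When $X$ is basic it is $\phi$-related to a vector field on $\mathrm{range}\,\phi_*$, while $V$ is $\phi$-related to the zero vector field (as $\phi_*V = 0$); therefore $\phi_*[V,X] = 0$, i.e. $[V,X]$ is vertical and $\mathcal{H}[V,X] = 0$. Taking horizontal parts in the torsion identity yields $\mathcal{H}\nabla_{V}X = \mathcal{H}\nabla_{X}V$, and the right-hand side equals $A_{X}V$ by the third identity proved above. The only non-formal ingredient is this last step, where the basic-field hypothesis is what lets one discard the bracket; the first four identities are immediate from the definitions, so no genuine obstacle is expected.
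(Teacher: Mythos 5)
Your proof is correct. The paper states this lemma as a citation to O'Neill's 1966 paper without giving a proof, and your verification --- reading off the four identities from the defining formulae \eqref{neill T} and \eqref{neill A} by discarding the summand killed by the projections, then using torsion-freeness together with the fact that $[V,X]$ is vertical when $X$ is basic and $V$ vertical --- is exactly the standard argument one would supply.
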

\begin{definition}\cite{sahin book}
Let $\phi_{1}:(M_1, g_{M_1})\rightarrow (N_1, g_{N_1})$ be a smooth map between two Riemannian manifolds. Then we say that $\phi_1$ is a conformal Riemannian
map at $p_1\in M_1$ if $0 < \text{rank}~\phi_{1*}{p_1} \leq \min\{m_1, n_1\}$ and $\phi_{1*}{p_1}$ maps the horizontal space $\mathcal{H}_{p_1} =
(\ker(\phi_{1*}{p_1}))^\perp$ conformally onto $\text{range}~\phi_{1*}{p_1},$ i.e., there exists a number $\lambda^2({p_1}) \neq 0$ such that
$g_{N_1}(\phi_{1*}X_1, \phi_{1*}Y_1) = \lambda^2(p_1)g_{M_1}(X_1, Y_1)$
for $X,Y \in \mathcal{H}(p_1).$ Also $\phi_{1}$ is called conformal Riemannian if $\phi_{1}$ is conformal Riemannian at
each $p_1 \in M_1.$
\end{definition}
\begin{definition}
\cite{J3} Let $(M_1, g_{M_1})$ and $(N_1, g_{N_1})$ be two Riemannian manifolds. A Riemannian map $\phi_{1} : (M_1, g_{M_1})\rightarrow (N_1, g_{N_1})$ is called a Clairaut Riemannian map if there is a function
$r : M_1 \rightarrow \mathbb{R}^+$ such that for every geodesic, making angles $\theta$ with the horizontal subspaces,
$r \sin \theta$ is constant.
\end{definition}
\subsection{Riemannian warped product manifolds}
Let $(M_1, g_{M_1})$ and $(M_2, g_{M_2})$ be two finite dimensional Riemannian manifolds with $\dim M_1 = m_1, \dim M_2 = m_2,$
respectively. Let $f$ be a positive smooth function on $M_1.$ The warped product $M = M_1 \times_f M_2$ of $M_1$ and $M_2$ is the Cartesian product $M_1 \times M_2$ with the metric $g_M = g_{M_1} + f^2g_{M_2}.$ It means the Riemannian metric $g_M$ on $M = (M_1\times_fM_2)$ can be defined for any two vecor fields $X, Y\in M$ in the following manner
$$g_M(X, Y) = g_{M_1}(\pi_{1*}X, \pi_{1*}Y) + f^2(\pi_1(p_1)) g_{M_2}(\pi_{2*}X, \pi_{2*}Y),$$ where $\pi_1 : M_1 \times_f M_2 \rightarrow M_1$ defined by $(x, y) \rightarrow x$ and $\pi_{2} : M_1 \times_f M_2 \rightarrow M_2$ defined by $(x, y) \rightarrow y$ are projections which are Riemannian submersion. Further, one can observe that the fibers $\{x\}\times M_2 = \pi_{1}^{-1}(x)$ and the leaves $M_1\times\{y\} = \pi_{2}^{-1}(y)$ are Riemannian submanifolds of $M_1 \times_f M_2.$ The vectors which are tangent to leaves and fibers are called horizontal and vertical vectors, respectively. If $v \in T_xM_1, x \in M_1$ and $y \in M_2,$ then the lift $\bar{v}$ of $v$ to $(x, y)$ is the unique vector $T_{(x,y)}M$ such that $(\pi_{1*})\bar{v} = v.$ For a vector field, $X \in\Gamma(T{M_1}),$ the lift of $X$ to $M=M_1\times_fM_2$ is the vector field, $\bar{X}$ whose value at $(x, y)$ is the lift of $X_x$ to $(x, y).$ Thus the lift of $X \in\Gamma(T{M_1})$ to $M_1\times_f M_2$ is the unique element of $\Gamma(T(M_1\times M_2))$ that is $\pi_{1}$-related to $X.$ The set of all horizontal lifts and vertical lifts is denoted by
$\mathcal{L}_H(M_1),$  $\mathcal{L}_V(M_2)$, respectively. Thus a vector field $\bar{E}$ of $M_1 \times_fM_2$ can be expressed as
$\bar{E} = \bar{X}+\bar{U}$ with $\bar{X}\in \mathcal{L}_H(M_1)$ and $\bar{U}\in \mathcal{L}_V(M_2).$ Hence, we can write 
$$\pi_{1*}(\mathcal{L}_H(M_1)) = \Gamma(TM_1), \pi_{2*}(\mathcal{L}_V(M_2)) = \Gamma(TM_2),$$ and so $\pi_{1*}(\bar{X}) = X \in\Gamma(TM_1)$ and $\pi_{2*}(\bar{U}) = U \in\Gamma(TM_2).$
Throughout this paper, we take same notation for vector field and its lift to warped product manifold.
\begin{definition}\cite{J14}
Let $\phi_i:M_i\rightarrow N_i$ be the Riemannian maps, where $i=1, 2.$ Then the map
$$\phi(= \phi_1\times\phi_2):M (= M_1\times_fM_2)\rightarrow N ( = N_1\times_\rho N_2)$$ defined as $(\phi_1\times\phi_2)(p_1, p_2) = (\phi_1(p_1), \phi_2(p_2))$ is a Riemannian warped product map.
\end{definition}
\begin{lemma}\label{lemma}\cite{chenbook}
Let $M = M_1 \times_f M_2$ be a warped product manifold. For any $X_1, Y_1\in \mathcal{L}(M_1)$ and $X_2, Y_2 \in \mathcal{L}(M_2),$ we have
\begin{enumerate}
\item $\nabla^M_{X_1} Y_1$ is the lift of $\nabla^{M_1}_{X_1} Y_1,$
\item  $\nabla ^M_{X_1} X_2 = \nabla^M_{X_2}X_1 = \frac{X_1(f)}{f}X_2,$
\item  nor $(\nabla^M_{X_2}Y_2) = -g_M(X_2, Y_2)(\nabla^M\ln f),$
\item  tan $(\nabla^M_{X_2}Y_2)$ is the lift of $\nabla^{M_2}_{X_2}Y_2.$	
\end{enumerate}
Here $\nabla^M,\nabla^{M_1}$ and $\nabla^{M_2}$ denote Riemannian connections on $M, M_1$ and $M_2,$ respectively.
\end{lemma}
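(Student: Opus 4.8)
The plan is to derive all four identities directly from the Koszul formula
$$2g_M(\nabla^M_E F, G) = E\, g_M(F,G) + F\, g_M(E,G) - G\, g_M(E,F) + g_M([E,F],G) - g_M([E,G],F) - g_M([F,G],E),$$
feeding in the lifted fields $X_1,Y_1\in\mathcal{L}(M_1)$, $X_2,Y_2\in\mathcal{L}(M_2)$ together with a test field $G$ ranging over both $\mathcal{L}(M_1)$ and $\mathcal{L}(M_2)$, and then reading off $\nabla^M_E F$ using the $g_M$-orthogonality of the two families of lifts. Before starting I would record three structural facts. (i) Since $g_M = g_{M_1} + f^2 g_{M_2}$, a lift from $M_1$ is $g_M$-orthogonal to a lift from $M_2$. (ii) The bracket of a lift from $M_1$ with a lift from $M_2$ vanishes, while a bracket of two lifts from the same factor is again a lift from that factor. (iii) Because $f$ is a function on $M_1$, a lift $X_2$ from $M_2$ annihilates $f$ and, more generally, every function pulled back from $M_1$, whereas a lift $X_1$ from $M_1$ annihilates every function pulled back from $M_2$; in particular $X_1(f^2) = 2fX_1(f)$ while $X_1\big(g_{M_2}(X_2,Y_2)\big)=0$ and $X_2(f^2)=0$.

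For (1), take $E=X_1$, $F=Y_1$. With $G=X_2$ vertical, every term on the right vanishes by (i)--(iii), so $\nabla^M_{X_1}Y_1$ is horizontal; with $G=Z_1$ horizontal, the Koszul identity for $g_M$ reduces verbatim to the Koszul identity for $g_{M_1}$, which identifies $\nabla^M_{X_1}Y_1$ with the lift of $\nabla^{M_1}_{X_1}Y_1$. For (2), the equality $\nabla^M_{X_1}X_2=\nabla^M_{X_2}X_1$ is immediate from $[X_1,X_2]=0$ and torsion-freeness; pairing $\nabla^M_{X_1}X_2$ with a horizontal test field gives $0$ exactly as in (1), so the result is vertical, and pairing with a vertical lift $Y_2$ leaves only the term $X_1 g_M(X_2,Y_2)=X_1\big(f^2 g_{M_2}(X_2,Y_2)\big)=2fX_1(f)\,g_{M_2}(X_2,Y_2)$, which after division by $2g_M(X_2,Y_2)=2f^2 g_{M_2}(X_2,Y_2)$ yields the coefficient $X_1(f)/f$.

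For (3) and (4), take $E=X_2$, $F=Y_2$. Against a horizontal test field $X_1$ only the term $-X_1 g_M(X_2,Y_2)=-2fX_1(f)\,g_{M_2}(X_2,Y_2)$ survives; rewriting $g_{M_2}(X_2,Y_2)=g_M(X_2,Y_2)/f^2$ and using that $\nabla^M\ln f$ is the horizontal lift of $\nabla^{M_1}\ln f$ with $g_M(\nabla^M\ln f,X_1)=X_1(\ln f)$, this reads $\mathrm{nor}(\nabla^M_{X_2}Y_2)=-g_M(X_2,Y_2)\,\nabla^M\ln f$. Against a vertical test field $Z_2$, fact (iii) lets $f^2$ factor out of every metric-derivative term and every bracket term (since $X_2(f^2)=0$ and all brackets here are vertical lifts), so the whole right-hand side collapses to $f^2$ times the Koszul identity for $g_{M_2}$, whence $\mathrm{tan}(\nabla^M_{X_2}Y_2)$ is the lift of $\nabla^{M_2}_{X_2}Y_2$.

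The computations are essentially mechanical once this setup is fixed; the only point that requires genuine care is the consistent use of fact (iii) in parts (2)--(4), namely keeping track of which functions are pulled back from which factor so that the product rule is applied correctly and $f^2$ either factors out cleanly (against a vertical test field) or contributes exactly the term $2fX_1(f)g_{M_2}(\cdot,\cdot)$ (against a horizontal test field). I expect this bookkeeping to be the main, though minor, obstacle.
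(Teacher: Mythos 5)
Your proof is correct: the paper itself gives no proof of this lemma (it is quoted from the cited reference), and your Koszul-formula argument, with the three structural facts about orthogonality of lifts, vanishing of cross-factor brackets, and which functions each lift annihilates, is exactly the classical proof found in O'Neill and Chen. The only cosmetic point is in part (2), where "division by $2g_M(X_2,Y_2)$" should really be phrased as checking that $g_M(\nabla^M_{X_1}X_2-\tfrac{X_1(f)}{f}X_2,\,Y_2)=0$ for every vertical $Y_2$ and then using that the difference is vertical; the underlying computation is right.
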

\begin{proposition}\cite{J15}
Let $\phi_i: M_i\rightarrow N_i,$  i = 1, 2, be smooth functions. Then
$$(\phi_1\times\phi_2)_*w = (\phi_{1*}u, \phi_{2*}v),$$
where $w = (u, v) \in T_{p}(M_1 \times M_2)$ and $p = (p_1, p_2).$
\end{proposition}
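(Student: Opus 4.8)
The plan is to compute the differential pointwise by representing tangent vectors as velocities of curves, after fixing once and for all the canonical identification $T_{(p_1,p_2)}(M_1\times M_2)\cong T_{p_1}M_1\oplus T_{p_2}M_2$ (and the analogous one on the target $N_1\times N_2$). Under this identification a vector $w\in T_p(M_1\times M_2)$ is written $w=(u,v)$ with $u=(\pi^{M}_1)_{*p}w\in T_{p_1}M_1$ and $v=(\pi^{M}_2)_{*p}w\in T_{p_2}M_2$, where $\pi^{M}_1,\pi^{M}_2$ are the natural projections of $M_1\times M_2$ onto its factors, and similarly for $\pi^{N}_1,\pi^{N}_2$.

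First I would choose a smooth curve $\gamma:(-\varepsilon,\varepsilon)\to M_1\times M_2$ with $\gamma(0)=p=(p_1,p_2)$ and $\dot\gamma(0)=w$, and split it componentwise as $\gamma(t)=(\gamma_1(t),\gamma_2(t))$ with $\gamma_i(t)\in M_i$. Differentiating the identities $\gamma_i=\pi^{M}_i\circ\gamma$ at $t=0$ gives $\dot\gamma_1(0)=u$ and $\dot\gamma_2(0)=v$. Next I would push this curve forward by $\phi_1\times\phi_2$: by the very definition of the product map $(\phi_1\times\phi_2)\circ\gamma(t)=\big(\phi_1(\gamma_1(t)),\phi_2(\gamma_2(t))\big)$, and since the differential of any smooth map sends $\dot\gamma(0)$ to the velocity at $t=0$ of the image curve, applying the identification on $N_1\times N_2$ yields
$$(\phi_1\times\phi_2)_{*p}\,w=\frac{d}{dt}\Big|_{0}(\phi_1\times\phi_2)(\gamma(t))=\big(\phi_{1*p_1}\dot\gamma_1(0),\ \phi_{2*p_2}\dot\gamma_2(0)\big)=\big(\phi_{1*}u,\ \phi_{2*}v\big),$$
which is the assertion. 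An equivalent route, avoiding curves, is to use the commuting squares $\pi^{N}_i\circ(\phi_1\times\phi_2)=\phi_i\circ\pi^{M}_i$ for $i=1,2$; applying the chain rule gives $(\pi^{N}_i)_*\circ(\phi_1\times\phi_2)_*=\phi_{i*}\circ(\pi^{M}_i)_*$, and since $(\pi^{M}_i)_*w$ and $(\pi^{N}_i)_*$ just extract the $i$-th component under the respective identifications, one reads off directly that the $i$-th component of $(\phi_1\times\phi_2)_{*}w$ is $\phi_{i*}$ of the $i$-th component of $w$.

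There is no serious obstacle here: the only point requiring care is to keep the identifications $T(M_1\times M_2)\cong TM_1\oplus TM_2$ and $T(N_1\times N_2)\cong TN_1\oplus TN_2$ mutually consistent, and to observe that the computation is independent of the representing curve $\gamma$ (again a consequence of the chain rule). In essence the proposition merely records that the differential of a product of maps is the product of the differentials.
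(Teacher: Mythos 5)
Your argument is correct and complete: both the curve-based computation and the chain-rule argument via the commuting squares $\pi^{N}_i\circ(\phi_1\times\phi_2)=\phi_i\circ\pi^{M}_i$ are the standard ways to verify that the differential of a product map is the product of the differentials, and you correctly anchor everything in the canonical identification $T_{(p_1,p_2)}(M_1\times M_2)\cong T_{p_1}M_1\oplus T_{p_2}M_2$ (which is exactly the isomorphism $\lambda$ recorded in the paper's Proposition on projections). Note that the paper itself states this result as a cited proposition from Brickell--Clark and gives no proof, so there is nothing to compare against; your write-up supplies a valid proof of the quoted fact.
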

\begin{proposition}\label{Ismphm}\cite{J15}
Let $\pi_{1}$ and $\pi_{2}$ be projections from $M_1\times_fM_2$ onto $M_1$ and $M_2,$ respectively. Then $\lambda: T_{(p_1,p_2)}(M_1 \times M_2)\rightarrow T_{p_1}M_1\oplus T_{p_2}M_2,$ defined by $$\lambda(x) = (\pi_{1*}, \pi_{2*})x,$$ is an isomorphism.
\end{proposition}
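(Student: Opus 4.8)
The plan is to observe that $\lambda$ is manifestly linear, since it is assembled from the differentials $\pi_{1*}$ and $\pi_{2*}$, each of which is linear on the tangent space at the relevant point, and then to establish that it is a bijection by a dimension count together with injectivity. Note first that the warping function $f$ plays no role in this statement: $\lambda$ involves only the differentials of the projections of the underlying product \emph{smooth} manifold $M_1\times M_2$, not its metric. Since
$$\dim T_{(p_1,p_2)}(M_1\times M_2) = m_1+m_2 = \dim\big(T_{p_1}M_1\oplus T_{p_2}M_2\big),$$
it suffices to prove that $\lambda$ is injective.

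For injectivity I would work in a product coordinate chart: choose coordinates $(x^1,\dots,x^{m_1})$ around $p_1$ in $M_1$ and $(y^1,\dots,y^{m_2})$ around $p_2$ in $M_2$, so that $(x^1,\dots,x^{m_1},y^1,\dots,y^{m_2})$ is a chart around $(p_1,p_2)$ in $M_1\times M_2$. In these coordinates $\pi_1$ and $\pi_2$ are the obvious coordinate projections, hence for $x = \sum_i a^i\,\partial_{x^i} + \sum_j b^j\,\partial_{y^j}$ one has $\pi_{1*}x = \sum_i a^i\,\partial_{x^i}$ and $\pi_{2*}x = \sum_j b^j\,\partial_{y^j}$. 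Thus $\lambda(x)=0$ forces all $a^i$ and all $b^j$ to vanish, i.e. $x=0$. Combined with the equality of dimensions, this shows $\lambda$ is an isomorphism.

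Alternatively, and perhaps more invariantly, one can exhibit the inverse directly using the inclusions $\iota_1\colon M_1\to M_1\times M_2,\ q\mapsto (q,p_2)$ and $\iota_2\colon M_2\to M_1\times M_2,\ q\mapsto (p_1,q)$; then the map $(u,v)\mapsto \iota_{1*}u+\iota_{2*}v$ is a two-sided inverse of $\lambda$, because $\pi_i\circ\iota_i=\mathrm{id}$ while $\pi_i\circ\iota_j$ is constant for $i\neq j$, so the chain rule gives $\pi_{1*}(\iota_{1*}u+\iota_{2*}v)=u$ and $\pi_{2*}(\iota_{1*}u+\iota_{2*}v)=v$, and conversely $\lambda$ applied to a given $x$ and then fed back through this map returns $x$ by the coordinate computation above. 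There is essentially no genuine obstacle here; the only points requiring care are to emphasize that all the linearity statements and computations are pointwise at $(p_1,p_2)$, and to record that this isomorphism is exactly what underlies the splitting $T_{(p_1,p_2)}(M_1\times_f M_2)=\ker\pi_{1*}\oplus\ker\pi_{2*}$ that is used implicitly throughout the rest of the paper.
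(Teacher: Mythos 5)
Your proof is correct and complete; note that the paper itself offers no proof of this proposition, simply citing it to Brickell--Clark \cite{J15}, so there is nothing in the text to compare against. Your argument (linearity, a dimension count plus injectivity in product coordinates, or equivalently exhibiting the inverse $(u,v)\mapsto \iota_{1*}u+\iota_{2*}v$ via the canonical inclusions) is the standard one, and your observation that the warping function $f$ is irrelevant here --- the statement concerns only the underlying smooth product manifold --- is exactly the right point to emphasize.
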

\begin{proposition}\label{Curvature}\cite{chenbook}
Let $M = M_1\times_{f}M_2$ be a warped product of two Riemannian manifolds. If $X_1, Y_1, Z_1 \in\mathcal{L}(M_1)$ and $X_2, Y_2, Z_2 \in\mathcal{L}(M_2),$  then we have
\begin{enumerate}
	\item $R(X_1, Y_1)Z_1 \in\mathcal{L}(M_1)$ is the lift of $R^1(X_1, Y_1)Z_1$ on $M_1,$
	\item $R(X_1, Y_2 )Z_1 = \frac{H^{f}(X_1, Y_1)}{f}Y_2,$
	\item $R(X_1, Y_1)Y_2 = R(Y_2, Z_2)X_1 = 0,$
	\item $R(X_1, Y_2)Z_2 = -\frac{g_M(Y_2, Z_2)}{f}\nabla_{X_1}\nabla f,$
	\item $R(X_2, Y_2)Z_2 = R^2(X_2, Y_2)Z_2+\frac{||\nabla f||^2}{f^2}\{g_M(X_2, Z_2)Y_2-g_M(Y_2, Z_2)X_2\},$
	where $R, R^1, R^2$ are the Riemannian curvature tensors of $M, M_1, M_2,$ respectively, and $H^f$
	is the Hessian of $f.$
\end{enumerate}
\end{proposition}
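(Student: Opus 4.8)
The plan is to compute each of the five curvature expressions directly from the definition $R(X,Y)Z=\nabla^M_X\nabla^M_Y Z-\nabla^M_Y\nabla^M_X Z-\nabla^M_{[X,Y]}Z$, substituting the connection identities of Lemma~\ref{lemma} at every stage. Two structural facts do most of the work: since horizontal and vertical lifts come from independent factors of the underlying product $M_1\times M_2$, one has $[X_1,X_2]=0$ for all $X_1\in\mathcal{L}(M_1)$ and $X_2\in\mathcal{L}(M_2)$; and since $f$ is pulled back from $M_1$, one has $X_2(f)=0$ for every $X_2\in\mathcal{L}(M_2)$. For item (1), all three arguments are horizontal lifts, and because $\nabla^M_{X_1}Y_1$ is again the lift of $\nabla^{M_1}_{X_1}Y_1$ (Lemma~\ref{lemma}(1)) and $[X_1,Y_1]$ lifts the $M_1$-bracket, iterating the computation shows that $R(X_1,Y_1)Z_1$ is precisely the lift of $R^1(X_1,Y_1)Z_1$; no vertical part appears since every intermediate term stays horizontal.

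For items (2)--(4) the computations run along the same lines, now mixing Lemma~\ref{lemma}(2)--(4). The recurring auxiliary identity is the Leibniz rule $\nabla^M_{X_1}\!\big((Y_1f/f)\,Z_2\big)=\big(X_1(Y_1f/f)\big)Z_2+(Y_1f/f)(X_1f/f)\,Z_2$, used together with $X_1(Z_1f)-(\nabla^{M_1}_{X_1}Z_1)f=H^f(X_1,Z_1)$; after expanding $X_1(Z_1f/f)$ by the quotient rule, the two $(Z_1f)(X_1f)/f^2$ terms cancel and exactly the Hessian term $\tfrac{H^f(X_1,Z_1)}{f}Y_2$ survives, giving (2). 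Item (3) follows because every term in the expansion of $R(X_1,Y_1)Y_2$ and of $R(Y_2,Z_2)X_1$ is of the form $(\text{function})\cdot(\text{vertical})$ with coefficients that cancel in pairs, using $\nabla^M_{X_1}X_2=\nabla^M_{X_2}X_1$ and the fact that $[X_1,Y_1]$ is horizontal. For (4) one additionally invokes $\operatorname{nor}(\nabla^M_{X_2}Z_2)=-g_M(X_2,Z_2)\,\nabla^M\ln f$; differentiating this along $X_1$, writing $\nabla^M\ln f=\nabla f/f$, and applying the product rule produces $-\tfrac{g_M(Y_2,Z_2)}{f}\nabla_{X_1}\nabla f$ once the $\|\nabla f\|^2/f^2$ pieces are reconciled with the $\nabla^M_{X_2}X_1$ terms.

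Item (5) is where the bookkeeping is heaviest, and I expect it to be the main obstacle. Here one must split every second covariant derivative $\nabla^M_{X_2}\nabla^M_{Y_2}Z_2$ into its tangential and normal parts via Lemma~\ref{lemma}(3)--(4): the purely tangential part reproduces $R^2(X_2,Y_2)Z_2$, while the normal part $-g_M(Y_2,Z_2)\nabla^M\ln f$, differentiated again and projected back, contributes a term proportional to $\|\nabla f\|^2/f^2$; care is needed with the sign in $\operatorname{nor}(\nabla^M_{X_2}Y_2)=-g_M(X_2,Y_2)\nabla^M\ln f$ and with the mixed tangential--normal cross terms, which must be shown to cancel using symmetry of $H^f$ and $\nabla^M_{X_2}X_1=\nabla^M_{X_1}X_2$. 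Collecting what remains yields exactly $R^2(X_2,Y_2)Z_2+\tfrac{\|\nabla f\|^2}{f^2}\{g_M(X_2,Z_2)Y_2-g_M(Y_2,Z_2)X_2\}$. As this is the classical warped-product curvature identity, the proof needs no new idea beyond a disciplined application of Lemma~\ref{lemma} and the two structural observations above.
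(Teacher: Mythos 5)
Your proposal is correct in outline and is essentially the classical argument: the paper itself offers no proof of this proposition, quoting it from \cite{chenbook}, and the standard proof there (and in O'Neill) is exactly your direct expansion of $R(X,Y)Z=\nabla^M_X\nabla^M_Y Z-\nabla^M_Y\nabla^M_X Z-\nabla^M_{[X,Y]}Z$ via Lemma~\ref{lemma}, together with the two structural facts $[X_1,X_2]=0$ and $X_2(f)=0$ for lifts from different factors. One point worth flagging: your computation correctly produces $\tfrac{H^f(X_1,Z_1)}{f}Y_2$ in item (2), whereas the statement as printed reads $\tfrac{H^f(X_1,Y_1)}{f}Y_2$ with a $Y_1$ that does not appear on the left-hand side; this is a typo in the quoted proposition, and your version is the correct one.
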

\section{Clairaut Riemannian warped product maps}
\begin{theorem}\label{thm1}
Let $\phi ( = \phi_1\times \phi_2): M (= M_1\times_fM_2)\rightarrow N (= N_1\times_\rho N_2)$ be a Riemannian warped product map between two Riemannian warped product manifolds and $\gamma = (\gamma_1, \gamma_2)$ be a regular curve on $M,$ where $\gamma_1$ and $\gamma_2$ are projections of $\gamma$ onto $M_1$ and $M_2,$ respectively. Then, for $\dot{\gamma} = (\dot{\gamma}_1, \dot{\gamma}_2)$ having $\mathcal{V}_1\dot{\gamma}_1 = U_1, \mathcal{H}_1\dot{\gamma}_1 = Y_1,$
 and $\mathcal{V}_2\dot{\gamma}_2 = U_2, \mathcal{H}_2\dot{\gamma}_2 = Y_2,$ we have
 \begin{enumerate}
\item[(i)] If $\dot{\gamma}$ is a vertical vector field, then $\gamma$ is geodesic if and only if
 \begin{equation}\label{gv}
 	\begin{split}
&\hat{\nabla}^1_{U_1}U_1+2\frac{U_1(f)}{f}U_2+\hat{\nabla}^2_{U_2}U_2-g_M(U_2, U_2)(\mathcal{V}\nabla^M\ln f) = 0, \\& \text{and}~~
 T_1(U_1, U_1)+T_2(U_2, U_2)-g_M(U_2, U_2)(\mathcal{H}\nabla^M\ln f)= 0.
\end{split}
\end{equation}
\item [(ii)] If $\dot{\gamma}$ is a horizontal vector field, then $\gamma$ is geodesic if and only if
\begin{equation}\label{hor}
\begin{split}
&A_1(Y_1, Y_1)+A_2(Y_2, Y_2)-g_M(Y_2, Y_2)(\mathcal{V}\nabla^M \ln f) = 0, \\& \text{and}~~
\mathcal{H}_1\nabla^{M_1}_{Y_1}Y_1+2\frac{Y_1(f)}{f}Y_2+\mathcal{H}_2\nabla^{M_2}_{Y_2}Y_2-g_M(Y_2, Y_2)(\mathcal{H}\nabla^M \ln f) = 0.
\end{split}		
\end{equation}
\item [(iii)] If $\dot{\gamma}$ is an arbitrary vector field, then $\gamma$ is geodesic if and only if
\begin{equation}\label{mix}
	\begin{split}
&\hat{\nabla}^1_{U_1}U_1+T_1(U_1, Y_1)+\mathcal{V}_1\nabla^{M_1}_{Y_1}U_1+A_1(Y_1, Y_1)+2\frac{U_1(f)}{f}U_2\\&+2\frac{Y_1(f)}{f}U_2-g_M(U_2, U_2)(\mathcal{V}\nabla^M \ln f)-g_M(Y_2, Y_2)(\mathcal{V}\nabla^M \ln f)\\&\hat{\nabla}^2_{U_2}U_2+T_2(U_2, Y_2)+A_2(Y_2, Y_2)+\mathcal{V}_2\nabla^{M_2}_{Y_2}U_2= 0, \\&\text{and}~~
T_1(U_1, U_1)+T_2(U_2, U_2)+\mathcal{H}_1\nabla^{M_1}_{U_1}Y_1+\mathcal{H}_2\nabla^{M_2}_{U_2}Y_2+A_1(Y_1, U_1)\\&+A_2(Y_2, U_2)+\mathcal{H}_1\nabla^{M_1}_{Y_1}Y_1+\mathcal{H}_2\nabla^{M_2}_{Y_2}Y_2+2\frac{U_1(f)}{f}Y_2+2\frac{Y_1(f)}{f}Y_2\\&-g_M(U_2, U_2)(\mathcal{H}\nabla^M \ln f)-g_M(Y_2, Y_2)(\mathcal{H}\nabla^M \ln f) = 0.
\end{split}
\end{equation}
\end{enumerate} 
\end{theorem}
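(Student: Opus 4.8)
The plan is to use the fact that a regular curve $\gamma$ is a geodesic of $(M,g_M)$ precisely when $\nabla^M_{\dot\gamma}\dot\gamma=0$, and to compute $\nabla^M_{\dot\gamma}\dot\gamma$ by peeling off two decompositions in turn: the warped–product splitting $TM\cong TM_1\oplus TM_2$ of Proposition \ref{Ismphm}, and then, on each factor $TM_i=\ker\phi_{i*}\oplus(\ker\phi_{i*})^\perp$, the O'Neill decomposition of Lemma \ref{Neill tensor}. Writing $\dot\gamma=(\dot\gamma_1,\dot\gamma_2)$ with $\dot\gamma_1=U_1+Y_1\in\mathcal{L}(M_1)$ and $\dot\gamma_2=U_2+Y_2\in\mathcal{L}(M_2)$, bilinearity of $\nabla^M$ gives
\[\nabla^M_{\dot\gamma}\dot\gamma=\nabla^M_{\dot\gamma_1}\dot\gamma_1+\nabla^M_{\dot\gamma_1}\dot\gamma_2+\nabla^M_{\dot\gamma_2}\dot\gamma_1+\nabla^M_{\dot\gamma_2}\dot\gamma_2,\]
and Lemma \ref{lemma} handles each term: $\nabla^M_{\dot\gamma_1}\dot\gamma_1$ is the lift of $\nabla^{M_1}_{\dot\gamma_1}\dot\gamma_1$; the two mixed terms are equal and each equals $\frac{\dot\gamma_1(f)}{f}\dot\gamma_2$; and $\nabla^M_{\dot\gamma_2}\dot\gamma_2$ has normal component $-g_M(\dot\gamma_2,\dot\gamma_2)(\nabla^M\ln f)$ and tangential component the lift of $\nabla^{M_2}_{\dot\gamma_2}\dot\gamma_2$, where $\nabla^M\ln f=\nabla^{M_1}\ln f$ because $f$ is a function on $M_1$. (As usual one extends $\dot\gamma_1,\dot\gamma_2$ to local vector fields to apply these identities; the resulting value along $\gamma$ is independent of the extension.)

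The second step is to apply the four identities of Lemma \ref{Neill tensor} to $\nabla^{M_1}_{U_1+Y_1}(U_1+Y_1)$ and $\nabla^{M_2}_{U_2+Y_2}(U_2+Y_2)$, and to split $g_M(\dot\gamma_2,\dot\gamma_2)=g_M(U_2,U_2)+g_M(Y_2,Y_2)$ (using $U_2\perp Y_2$) and $\nabla^M\ln f=\mathcal{V}\nabla^M\ln f+\mathcal{H}\nabla^M\ln f$ according to the $\phi_1$–splitting of $TM_1$. Since $\ker\phi_*=\ker\phi_{1*}\oplus\ker\phi_{2*}$, and using the block form of $g_M$ also $(\ker\phi_*)^\perp=(\ker\phi_{1*})^\perp\oplus(\ker\phi_{2*})^\perp$ (both immediate from $(\phi_1\times\phi_2)_*(u,v)=(\phi_{1*}u,\phi_{2*}v)$), the equation $\nabla^M_{\dot\gamma}\dot\gamma=0$ is equivalent to the separate vanishing of $\mathcal{V}(\nabla^M_{\dot\gamma}\dot\gamma)$ and $\mathcal{H}(\nabla^M_{\dot\gamma}\dot\gamma)$. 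Collecting the terms of the expansion — recalling that $T_i$ and $A_i$ reverse the two subbundles, so that $\hat\nabla^i_{U_i}U_i$, $T_i(U_i,Y_i)$, $\mathcal{V}_i\nabla^{M_i}_{Y_i}U_i$, $A_i(Y_i,Y_i)$ are vertical while $T_i(U_i,U_i)$, $\mathcal{H}_i\nabla^{M_i}_{U_i}Y_i$, $A_i(Y_i,U_i)$, $\mathcal{H}_i\nabla^{M_i}_{Y_i}Y_i$ are horizontal, and that $2\frac{\dot\gamma_1(f)}{f}\dot\gamma_2=2\frac{U_1(f)+Y_1(f)}{f}(U_2+Y_2)$ splits along the $U_2$/$Y_2$ decomposition — yields exactly the pair of identities \eqref{mix} of case (iii). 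Cases (i) and (ii) then follow by specialization: setting $Y_1=Y_2=0$ eliminates all $A_i$– and mixed-derivative terms and collapses the mixed terms to $2\frac{U_1(f)}{f}U_2$, giving \eqref{gv}, while setting $U_1=U_2=0$ gives \eqref{hor}.

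I expect the only real difficulty to be the bookkeeping: the full expansion in case (iii) produces over twenty terms, and the delicate points are to sort each one correctly into the vertical or the horizontal component, and to treat $\nabla^M\ln f$ with care — it is a horizontal lift from $M_1$ but it is \emph{not} a priori tangent to $(\ker\phi_{1*})^\perp$, so it genuinely contributes to both the vertical equation (through $\mathcal{V}\nabla^M\ln f$) and the horizontal equation (through $\mathcal{H}\nabla^M\ln f$). There is no conceptual obstacle beyond this organization, and once the collection is carried out, comparison with \eqref{gv}, \eqref{hor} and \eqref{mix} is immediate.
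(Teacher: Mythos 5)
Your proposal is correct and follows essentially the same route as the paper: expand $\nabla^M_{\dot\gamma}\dot\gamma$ via the warped-product splitting and Lemma \ref{lemma}, then apply the O'Neill identities of Lemma \ref{Neill tensor} and separate the vertical and horizontal components. The only (cosmetic) difference is that you treat the general case (iii) first and obtain (i) and (ii) by specialization, whereas the paper runs the same computation three times.
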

\begin{proof}
Let $\gamma : I\rightarrow M$ be a regular curve. Then 
$$\nabla^M_{\dot{\gamma}}\dot{\gamma} = \nabla^M_{(\dot{\gamma}_1, \dot{\gamma}_2)}(\dot{\gamma}_1, \dot{\gamma}_2).$$
Making use of Proposition \ref{Ismphm}, we can write
$$\nabla^M_{\dot{\gamma}}\dot{\gamma} =  \nabla^M_{\dot{\gamma}_1}\dot{\gamma}_1+\nabla^M_{\dot{\gamma}_1}\dot{\gamma}_2+\nabla^M_{\dot{\gamma}_2}\dot{\gamma}_1+\nabla^M_{\dot{\gamma}_2}\dot{\gamma}_2.$$
Using  Lemma \ref{lemma}, we get
\begin{equation}\label{dotgamma}
\nabla^M_{\dot{\gamma}}\dot{\gamma} = \nabla^{M_1}_{\dot{\gamma}_1}\dot{\gamma}_1+2\frac{\dot{\gamma}_1(f)}{f}\dot{\gamma}_2-g_M(\dot{\gamma}_2, \dot{\gamma}_2)(\nabla^M\ln f) +\nabla^{M_2}_{\dot{\gamma}_2}\dot{\gamma}_2.
\end{equation}
\begin{enumerate}
\item [(i)] Let $\dot{\gamma}$ be a vertical vector field, i.e., $\dot{\gamma}_1 = U_1$ and $\dot{\gamma}_2 = U_2.$ Then, from \eqref{dotgamma}, we have
$$\nabla^M_{\dot{\gamma}}\dot{\gamma} = \nabla^{M_1}_{U_1}U_1+2\frac{U_1(f)}{f}U_2-g_M(U_2, U_2)(\nabla^M\ln f) +\nabla^{M_2}_{U_2}U_2.$$ 
Using Lemma \ref{Neill tensor} in above equation, we get
\begin{equation}\label{Vdotgamma}
\begin{split}
\nabla^M_{\dot{\gamma}}\dot{\gamma} &= T_1(U_1, U_1)+\hat{\nabla}^1_{U_1}U_1+2\frac{U_1(f)}{f}U_2+T_2(U_2, U_2)+\hat{\nabla}^2_{U_2}U_2\\&-g_M(U_2, U_2)(\nabla^M\ln f). 		
\end{split}
\end{equation}
Taking vertical component of vector field in \eqref{Vdotgamma}, we get
\begin{equation}\label{V1}
\mathcal{V}\nabla^M_{\dot{\gamma}}\dot{\gamma} = \hat{\nabla}^1_{U_1}U_1+2\frac{U_1(f)}{f}U_2+\hat{\nabla}^2_{U_2}U_2-g_M(U_2, U_2)(\mathcal{V}\nabla^M\ln f).
\end{equation}


Taking horizontal component of vector field in \eqref{Vdotgamma}, we get
\begin{equation}\label{H1}
\mathcal{H}\nabla^M_{\dot{\gamma}}\dot{\gamma} = T_1(U_1, U_1)+T_2(U_2, U_2)-g_M(U_2, U_2)(\mathcal{H}\nabla^M\ln f).
\end{equation}

Now, $\gamma$ is geodesic if and only if $\mathcal{V}	\nabla^M_{\dot{\gamma}}\dot{\gamma} = 0$ and $\mathcal{H}	\nabla^M_{\dot{\gamma}}\dot{\gamma} = 0.$ Therefore, from \eqref{V1} and \eqref{H1}, we get the required assertion \eqref{gv}.
	\item [(ii)] Let $\dot{\gamma}$ be a horizontal vector field, i.e., $\dot{\gamma}_1 = Y_1$ and $\dot{\gamma}_2 = Y_2.$ Then, from \eqref{dotgamma}, we have
$$\nabla^M_{\dot{\gamma}}\dot{\gamma} = \nabla^{M_1}_{Y_1}Y_1+2\frac{Y_1(f)}{f}Y_2-g_M(Y_2, Y_2)(\nabla^M\ln f) +\nabla^{M_2}_{Y_2}Y_2.$$ 
Using Lemma \ref{Neill tensor} in above equation, we get
\begin{equation}\label{Hdotgamma}
\begin{split}
\nabla^M_{\dot{\gamma}}\dot{\gamma}& = A_1(Y_1, Y_1)	+ \mathcal{H}_1\nabla^{M_1}_{Y_1}Y_1+2\frac{Y_1(f)}{f}Y_2+A_2(Y_2, Y_2)+\mathcal{H}_2\nabla^{M_2}_{Y_2}Y_2\\&-g_M(Y_2, Y_2)(\nabla^M \ln f).
\end{split}
\end{equation}
Taking vertical component of the vector field in above equation, we get
\begin{equation}\label{V2}
\mathcal{V}\nabla^M_{\dot{\gamma}}\dot{\gamma} = A_1(Y_1, Y_1)+A_2(Y_2, Y_2)-g_M(Y_2, Y_2)(\mathcal{V}\nabla^M \ln f).
\end{equation}

Taking horizontal part of vector field in \eqref{Hdotgamma}, we get
\begin{equation}\label{H2}
\mathcal{H}\nabla^M_{\dot{\gamma}}\dot{\gamma} = \mathcal{H}_1\nabla^{M_1}_{Y_1}Y_1+2\frac{Y_1(f)}{f}Y_2+\mathcal{H}_2\nabla^{M_2}_{Y_2}Y_2-g_M(Y_2, Y_2)(\mathcal{H}\nabla^M \ln f).	
\end{equation}
Now, $\gamma$ is geodesic if and only if $\mathcal{V}	\nabla^M_{\dot{\gamma}}\dot{\gamma} = 0$ and $\mathcal{H}	\nabla^M_{\dot{\gamma}}\dot{\gamma} = 0.$ Hence, from \eqref{V2} and \eqref{H2}, we get the required result \eqref{hor}.
\item [(iii)] Let $\dot{\gamma}$ be an arbitrary vector field, i.e., $\dot{\gamma}_1 =  U_1+Y_1$ and $\dot{\gamma}_2 = U_2+Y_2.$ Then,
from \eqref{dotgamma}, we obtain 
\begin{equation*}
\begin{split}
\nabla^M_{\dot{\gamma}}\dot{\gamma}& = \nabla^{M_1}_{U_1+Y_1}(U_1+Y_1)+2\frac{(U_1+Y_1)(f)}{f}(U_2+Y_2)+\nabla^{M_2}_{U_2+Y_2}U_2+Y_2\\&-g_M(U_2+Y_2, U_2+Y_2)(\nabla^M\ln f). 		
\end{split}
\end{equation*}
After simplification and using Lemma \ref{Neill tensor}, we obtain
\begin{equation}\label{VHdotGamma}
\begin{split}
\nabla^M_{\dot{\gamma}}\dot{\gamma} &= T_1(U_1, U_1)+\hat{\nabla}^1_{U_1}U_1+T_1(U_1, Y_1)+\mathcal{H}_1\nabla^{M_1}_{U_1}Y_1+A_1(Y_1, U_1)+\\&\mathcal{V}_1\nabla^{M_1}_{Y_1}U_1+A_1(Y_1, Y_1)+\mathcal{H}_1\nabla^{M_1}_{Y_1}Y_1+2\frac{U_1(f)}{f}U_2+2\frac{U_1(f)}{f}Y_2\\&+2\frac{Y_1(f)}{f}U_2+2\frac{Y_1(f)}{f}Y_2-g_M(U_2, U_2)(\nabla^Mlnf)-g_M(Y_2, Y_2)(\nabla^M\ln f)\\&+T_2(U_2, U_2)+\hat{\nabla}^2_{U_2}U_2+T_2(U_2, Y_2)+\mathcal{H}_2\nabla^{M_2}_{U_2}Y_2+A_2(Y_2, U_2)\\&+\mathcal{V}_2\nabla^{M_2}_{Y_2}U_2+A_2(Y_2, Y_2)+\mathcal{H}_2\nabla^{M_2}_{Y_2}Y_2.		
\end{split}
\end{equation}	
 Taking vertical component of vector field \eqref{VHdotGamma}, we obtain
\begin{equation}\label{V3}
	\begin{split}
		\mathcal{V}\nabla^M_{\dot{\gamma}}\dot{\gamma} =& \hat{\nabla}^1_{U_1}U_1+T_1(U_1, Y_1)+\mathcal{V}_1\nabla^{M_1}_{Y_1}U_1+A_1(Y_1, Y_1)+2\frac{U_1(f)}{f}U_2\\&+2\frac{Y_1(f)}{f}U_2-g_M(U_2, U_2)(\mathcal{V}\nabla^M \ln f)-g_M(Y_2, Y_2)(\mathcal{V}\nabla^M \ln f)\\&\hat{\nabla}^2_{U_2}U_2+T_2(U_2, Y_2)+A_2(Y_2, Y_2)+\mathcal{V}_2\nabla^{M_2}_{Y_2}U_2,
	\end{split}
\end{equation} 
and similarly, taking horizontal component of vector field \eqref{VHdotGamma}, we have
\begin{equation}\label{H3}
\begin{split}
\mathcal{H}\nabla^M_{\dot{\gamma}}\dot{\gamma}& = T_1(U_1, U_1)+T_2(U_2, U_2)+\mathcal{H}_1\nabla^{M_1}_{U_1}Y_1+\mathcal{H}_2\nabla^{M_2}_{U_2}Y_2+A_1(Y_1, U_1)\\&+A_2(Y_2, U_2)+\mathcal{H}_1\nabla^{M_1}_{Y_1}Y_1+\mathcal{H}_2\nabla^{M_2}_{Y_2}Y_2+2\frac{U_1(f)}{f}Y_2+2\frac{Y_1(f)}{f}Y_2\\&-g_M(U_2, U_2)(\mathcal{H}\nabla^M \ln f)-g_M(Y_2, Y_2)(\mathcal{H}\nabla^M \ln f).
\end{split}
\end{equation}
Now, $\gamma$ is geodesic if and only if $\mathcal{V}\nabla^M_{\dot{\gamma}}\dot{\gamma} = 0$ and $\mathcal{H}\nabla^M_{\dot{\gamma}}\dot{\gamma} = 0.$ Therefore, from \eqref{H3} and \eqref{V3}, we obtain the required assertion \eqref{mix}.\\
This completes the proof.
\end{enumerate}
\end{proof}
\begin{theorem}
Let $\phi (= \phi_1\times\phi_2) : M (= M_1\times_fM_2)\rightarrow N (= N_1\times_\rho N_2)$ be a Riemannian warped product map between two Riemannian warped product manifolds. Then $\phi$ is a Clairaut Riemannian warped product map with $r = e^g$ if and only if $\phi_1$ has totally umbilical fibers and $\phi_2$ has totally geodesic fibers and the mean curvature vector field of $\phi$ is $-\nabla^M \ln f$, where $\phi_i:M_i\rightarrow N_i$ are Riemannian maps and $g$ is a smooth function on $M.$
\end{theorem}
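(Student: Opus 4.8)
The proof reduces the Clairaut condition for $\phi$ to a single identity for the O'Neill tensor $T$ of $\phi$ on the vertical distribution, which is then matched against the corresponding tensors $T^1, T^2$ of $\phi_1,\phi_2$ together with the warping data. First I would pin down the geometry of $\phi$. Since the differential of $\phi$ acts componentwise, $\ker\phi_*=\ker\phi_{1*}\oplus\ker\phi_{2*}$, and because $g_M=g_{M_1}+f^2g_{M_2}$ with $TM_1\perp TM_2$ the orthogonal complements split factor-wise; hence $\mathcal V=\mathcal V_1\oplus\mathcal V_2$ and $\mathcal H=\mathcal H_1\oplus\mathcal H_2$, where $\mathcal V_i=\ker\phi_{i*}$ and $\mathcal H_i=(\ker\phi_{i*})^\perp$. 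Moreover $\nabla^M\ln f$ is the lift of $\nabla^{M_1}\ln f$ and so lies in the $TM_1$-factor, so its $\phi$-horizontal part $\mathcal H(\nabla^M\ln f)$ is just its $\mathcal H_1$-component.

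The central step is to compute $T$ on vertical vectors. For $V=V_1+V_2$ with $V_i\in\Gamma(\mathcal V_i)$, I would expand $\nabla^M_VV$ via Lemma~\ref{lemma} — the term $\nabla^{M_1}_{V_1}V_1$, the two equal cross terms $\tfrac{V_1(f)}{f}V_2$, and $\nabla^M_{V_2}V_2=\nabla^{M_2}_{V_2}V_2-g_M(V_2,V_2)\nabla^M\ln f$ — and then decompose each summand into its $\mathcal V_1,\mathcal V_2,\mathcal H_1,\mathcal H_2$ parts by applying Lemma~\ref{Neill tensor} to $\phi_1$ and to $\phi_2$. The cross terms lie entirely in $\mathcal V_2$, so the horizontal part gives
\begin{equation*}
T_VV=T^1_{V_1}V_1+T^2_{V_2}V_2-g_M(V_2,V_2)\,\mathcal H(\nabla^M\ln f),
\end{equation*}
and polarising (using that $T$ is symmetric on vertical vectors) yields the analogue for $T_VW$. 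Next I would invoke the characterisation of Clairaut Riemannian maps, which also falls out of the geodesic computation behind Theorem~\ref{thm1}: along a geodesic $\gamma$ one has $\tfrac12\frac{d}{dt}\|\mathcal V\dot\gamma\|^2=g_M\!\big(T_{\mathcal V\dot\gamma}\mathcal V\dot\gamma,\ \mathcal H\dot\gamma\big)$, so computing $\frac{d}{dt}\big(e^{2g}\|\mathcal V\dot\gamma\|^2\big)$ at an arbitrary point and initial velocity shows that $e^{g}\sin\theta$ is constant along every geodesic if and only if $\nabla^M g$ is horizontal and $T_VV=-g_M(V,V)\nabla^M g$ for all vertical $V$; equivalently, the fibres of $\phi$ are totally umbilical with mean curvature vector $-\nabla^M g$.

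Combining the two facts gives the equivalence. For the ``if'' direction, substitute $T^1_{V_1}V_1=g_{M_1}(V_1,V_1)H_1$ (fibres of $\phi_1$ umbilical, mean curvature $H_1$) and $T^2_{V_2}V_2=0$ (fibres of $\phi_2$ totally geodesic) into the displayed formula; the hypothesis that the mean curvature vector of $\phi$ is $-\nabla^M\ln f$ forces $H_1=-\nabla^M\ln f$, hence $T_VV=-g_M(V,V)\nabla^M\ln f$, so by the characterisation $\phi$ is Clairaut with $r=e^{g}$, $g=\ln f$. For the ``only if'' direction, insert $T_VV=-g_M(V,V)\nabla^M g$ (with $\nabla^M g$ horizontal) into the formula and exploit $\mathcal V=\mathcal V_1\oplus\mathcal V_2$ to vary $V_1$ and $V_2$ independently: matching $TM_2$-components forces the $\mathcal H_2$-part of $\nabla^M g$ to vanish and $T^2_{V_2}V_2=0$, i.e.\ $\phi_2$ has totally geodesic fibres; setting $V_2=0$ then gives $T^1_{V_1}V_1=-g_{M_1}(V_1,V_1)\nabla^M g$, i.e.\ $\phi_1$ has totally umbilical fibres; and setting $V_1=0$ gives $\mathcal H(\nabla^M\ln f)=\nabla^M g$, so the mean curvature vector of $\phi$ equals $-\nabla^M g$, i.e.\ $-\nabla^M\ln f$.

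I expect the main obstacle to be the bookkeeping in the central step — splitting each piece of $\nabla^M_VV$ correctly among the four subbundles $\mathcal V_1,\mathcal V_2,\mathcal H_1,\mathcal H_2$ — together with the justification, in the ``only if'' direction, that the single tensor identity decouples into separate conditions on $\phi_1$, on $\phi_2$ and on $f$; this is precisely where the splitting $\mathcal V=\mathcal V_1\oplus\mathcal V_2$ and the orthogonality $TM_1\perp TM_2$ are needed. One must also be attentive to the precise meaning of ``mean curvature vector field of $\phi$'' (it is necessarily horizontal, which implicitly constrains $\nabla^M\ln f$) and to the identification $g=\ln f$, up to an additive constant, that the Clairaut condition imposes.
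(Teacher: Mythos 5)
Your proposal is correct and follows essentially the same route as the paper: differentiate the norm of the vertical (equivalently horizontal) component of $\dot\gamma$ along a geodesic to obtain $g_M(T_{U}U,\mathcal H\dot\gamma)$, combine with $\frac{d}{dt}(e^{g\circ\gamma}\sin\omega)=0$ to reduce the Clairaut condition to $T_UU=-g_M(U,U)\nabla^M g$ with $\nabla^M g$ horizontal, and then split this identity via $\mathcal V=\mathcal V_1\oplus\mathcal V_2$ and Lemma \ref{lemma} into umbilicity of the $\phi_1$-fibres, geodesy of the $\phi_2$-fibres, and $\nabla^M g=\nabla^M\ln f$. Your treatment of the decoupling in the ``only if'' direction and of the identification $g=\ln f$ up to an additive constant is in fact slightly more explicit than the paper's.
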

\begin{proof}
	Let $\gamma : I\rightarrow M$ be a geodesic on $M$ with $\mathcal{V}\dot{\gamma}(t) = U(t) = (U_1(t), U_2(t))$ and $\mathcal{H}\dot{\gamma}(t) = Y(t) = (Y_1(t), Y_2(t))$ and
	$\omega(t)$ denotes the angle lying in $[0, \frac{\pi}{2}]$ between $\dot{\gamma}(t)$ and $Y(t).$ Assuming $b = ||\dot{\gamma}(t)||^2,$ we can write
	\begin{equation}\label{cos sqr}
		g_{\gamma(t)}(Y(t), Y(t)) = b \cos^2\omega(t),
	\end{equation}
	\begin{equation}\label{sin sqr}
		g_{\gamma(t)}(U(t), U(t)) = b \sin^2\omega(t).
	\end{equation}
	Taking derivative of \eqref{cos sqr} w.r.t `t', we get
	$$\frac{d}{dt}g_{\gamma(t)}(Y(t), Y(t)) = -2b\cos\omega(t) \sin\omega(t)\frac{d\omega}{dt}$$
	which gives
	$$g_M(\nabla^M_{\dot{\gamma}}Y, Y) = g_M(\nabla^M_{X_1+X_2}(Y_1+Y_2), Y_1+Y_2) = -b\cos\omega(t)\sin\omega(t)\frac{d\omega}{dt}$$ 
	which gives
	\begin{equation*}
		\begin{split}
&g_M(\nabla^M_{U_1}Y_1+\nabla^M_{U_2}Y_1+\nabla^M_{Y_1}Y_1+\nabla^M_{Y_2}Y_1+\nabla^M_{U_1}Y_2+\nabla^M_{U_2}Y_2+\nabla^M_{Y_1}Y_2+\nabla^M_{Y_2}Y_2, Y_1+Y_2)\\& = -b\cos\omega(t)\sin\omega(t)\frac{d\omega}{dt}.			
		\end{split}
	\end{equation*}
	Using Lemma  \ref{Neill tensor} and Lemma \ref{lemma} in above equation, we get 
	\begin{equation*}
		\begin{split}
		&g_M(T(U_1, Y_1)+H\nabla^M_{U_1}Y_1+\frac{Y_1(f)}{f}U_2+A(Y_1, Y_1)+H\nabla^M_{Y_1}Y_1+\frac{Y_1(f)}{f}Y_2+\frac{U_1(f)}{f}Y_2\\&+T(U_2, Y_2)+H\nabla^M_{U_2}Y_2+\frac{Y_1(f)}{f}Y_2+A(Y_2, Y_2)+H\nabla^M_{Y_2}Y_2, Y)= -b\cos\omega(t)\sin\omega(t)\frac{d\omega}{dt}		
		\end{split}
	\end{equation*}
	which gives
	\begin{equation*}
		\begin{split}
		&g_M(H\nabla^M_{U_1}Y_1+H\nabla^M_{Y_1}Y_1+\frac{Y_1(f)}{f}Y_2+\frac{U_1(f)}{f}Y_2+\frac{Y_1(f)}{f}Y_2+H\nabla^M_{Y_2}Y_2+H\nabla^M_{U_2}Y_2, Y) \\&= -b\cos\omega(t)\sin\omega(t)\frac{d\omega}{dt}.
		\end{split}
	\end{equation*}
Using Theorem \ref{thm1} in above equation, we get
\begin{equation*}
\begin{split}
&g_M\big(-\frac{U_1(f)}{f}Y_2-\frac{Y_1(f)}{f}Y_2-\frac{Y_1(f)}{f}Y_2-A(Y_1, U_1)-A(Y_2, U_2)-A(Y_1, U_2)\\&-A(Y_2,U_1)-T(U_1, U_1)-T(U_2, U_2)+\frac{Y_1(f)}{f}Y_2+\frac{U_1(f)}{f}Y_2+\frac{Y_1(f)}{f}Y_2, Y\big)\\& =-b\cos\omega(t)\sin\omega(t)\frac{d\omega}{dt}
\end{split}
\end{equation*}
which implies
\begin{equation}\label{T3.22}
g_M(T(U, U), Y) = b\cos\omega(t)\sin\omega(t)\frac{d\omega}{dt}.
\end{equation}
	Since $\phi$ is a Clairaut Riemannian warped product map with $r = e^g$ if and only if $\frac{d}{dt}(e^{g\circ\gamma}\sin\omega t) = 0$ this implies
	$e^{g\circ\gamma}\cos\omega(t)\frac{d\omega}{dt}+e^{g\circ\gamma}\sin\omega (t)\frac{d}{dt}(go\gamma) = 0.$\\
	Multiplying by $b\sin\omega(t)$ and using \eqref{sin sqr}, we get
	$$g_M(U, U)g_M(\dot{\gamma}, \nabla^M g) = -b\cos\omega(t)\sin\omega(t)\frac{d\omega}{dt}.$$
	From above equation and \eqref{T3.22}, we get
	$$g_M(T(U, U), Y) = -g_M(U, U)g_M(\dot{\gamma}, \nabla^M g)$$
For any geodesic $\gamma$ on $M,$ if an initial tangent vector is vertical, then $\nabla^M g$ turns out to be horizontal this implies $g_M(U, \nabla^M g) = 0.$	Therefore, from above equation,  we gwt
	$$T(U, U) = -g_M(U, U)\nabla^M g.$$
	Then for any $U_1\in\Gamma(ker\phi_{1*})$ and $U_2\in\Gamma(ker\phi_{2*}),$ we have
	\begin{equation*}
		\begin{split}
	T(U_1, U_1) = & T_1(U_1, U_1) = -g_{M_1}(U_1, U_1)\nabla^Mg,\\
		T(U_2, U_2) =& T_2(U_2, U_2)-g_{M}(U_2, U_2)\nabla^M \ln f = -g_{M}(U_2, U_2)\nabla^M g.		
		\end{split}
	\end{equation*}
This shows that $\phi$ has mean curvature vector field $H = -\nabla^M g = -\nabla^M \ln f.$ We conclude that $\phi$ has totally umbilical fibers (or $\phi$ is Clairaut Riemannian warped product map) if and only if $\phi_1$ has totally umbilical fibers and $\phi_2$ has totally geodesic fibers. 
		\end{proof}
\subsection{Curvatures, Ricci soliton and Bochner type formulae}
\begin{theorem}\label{Ricci curv.}
Let $\phi ( = \phi_1 \times \phi_2) : M (= M_1\times_f M_2)\rightarrow N (= N_1 \times_\rho N_2)$ be a Clairaut Riemannian
warped product map between two Riemannian warped product
manifolds where $\dim M_1 = m_1, \dim M_2 = m_2, \dim N_1 = n_1$ and $\dim N_2 = n_2.$ Then, the Ricci curvatures are given as
\begin{enumerate}
\item [(i)]\begin{equation*}
\begin{split}
\Ric(U_1, V_1) = &\hat{Ric}^1(U_1, V_1)-(m_1-n_1)||\nabla^M\ln f||^2g_{M_1}(U_1, V_1)-m_2fH^f(U_1, V_1)\\&-g_{M_1}(U_1, V_1)\Div_{h_1}(\nabla^M\ln f)+g_{M_1}\big(A_1(e_a, U_1), A_1(e_a, V_1)\big),
\end{split}		
\end{equation*} 
\item [(ii)]\begin{equation*}
\begin{split}
\Ric(U_2, V_2) = &f^2\hat{Ric}^2(U_2, V_2)-(f{\Delta f}+f^2(m_2-1)||\grad f||^2)g_{M_2}(U_2, V_2)\\& +f^2g_{M_2}\big(A_2(\tilde{e_b}, U_2), A_2(\tilde{e_b}, V_2)\big),
\end{split}
\end{equation*}
 \item [(iii)]\begin{equation*}
	\begin{split}
		\Ric(Y_1, U_1) =& -(m_1-n_1-1)g_{M_1}(\nabla^{M_1}_{U_1}\nabla^M \ln f, Y_1)+g_{M_1}\big((\nabla^{M_1}_{e_a}A_1)_{e_a}Y_1, U_1\big)\\&+2g_{M_1}(T_1(U_1, e_a), A_1(e_a, Y_1))	-fg_{M_1}(\nabla^{M_1}_{Y_1}\nabla f, U_1)m_2,
	\end{split}
\end{equation*}
\item [(iv)] \begin{equation*}
	\Ric(Y_2, U_2) =f^2g_{M_2}((\nabla^{M_2}_{\tilde{{e_b}}}A_2)_{\tilde{{e_b}}}Y_2, U_2),
\end{equation*}
\item [(v)]\begin{equation*}
\begin{split}
\Ric(Y_1, Z_1) = &\Ric^{{\text{range}}\phi_{1*}}(\phi_{1*}Y_1, \phi_{1*}Z_1)-(m_1-n_1)g_{M_1}(\nabla^{M_1}_{Y_1}\nabla^M\ln f, Z_1)\\&+g_{M_1}\big((\nabla^{M_1}_{e_i}A_1)_{Y_1}Z_1, e_i\big)-g_{M_1}\big(T_1(e_i, Y_1), T_1(e_i, Z_1)\big)\\&+g_{M_1}\big(A_1(Y_1, e_i), A_1(Z_1, e_i)\big)+g_{N_1}\big((\nabla\phi_{1*})(Y_1, Z_1), \tau^{(\ker\phi_{1*})^\perp}\big)\\&-m_2fH^f(Y_1, Z_1)-g_{N_1}\big((\nabla\phi_{1*})(Y_1, e_a), (\nabla\phi_{1*})(Z_1, e_a)\big),
\end{split}
\end{equation*}
\item [(vi)]\begin{equation*}
\begin{split}
\Ric(Y_2, Z_2) = &f^2\Ric^{range\phi_{2*}}(\phi_{2*}Y_2, \phi_{2*}Z_2)+f^2g_{N_2}\big((\nabla\phi_{2*})(Y_2, Z_2), \tau^{(\ker\phi_{2*})^\perp}\big)\\&+f^2g_{M_2}(A_2(Y_2, \tilde{e_j}), A_2(Z_2, \tilde{e_j})-f^2g_{N_2}\big((\nabla\phi_{2*})(Y_2, \tilde{e_b}), (\nabla\phi_{2*})(Z_2, \tilde{e_b})\big)\\&+f^2g_{M_2}\big((\nabla^{M_2
}_{\tilde{e_j}}A_2)_{Y_2}Z_2, \tilde{e_j})-(f{\Delta f}+f^2(m_2-1)||\grad f||^2)g_{M_2}(Y_2, Z_2),		
\end{split}
\end{equation*}
\item [(vii)] \begin{equation*}
\Ric(Y_1, Y_2) = \Ric(Y_1, U_2) = \Ric(Y_2, U_1) = \Ric(U_1, U_2) = 0,
\end{equation*}
\end{enumerate}
where $\hat{Ric}^1, \hat{Ric}^2, Ric^{range\phi_{1*}}, Ric^{range\phi_{2*}}$ denote the Ricci curvature of fibres of $\phi_1,$ fibres of $\phi_2,$ $\text{range}~\phi_{1*},$ $\text{range}~\phi_{2*},$ respectively and $Y_i, Z_i\in\Gamma(\mathcal{H}_i), U_i, V_i\in\Gamma(\mathcal{V}_i), i=1,2.$
\end{theorem}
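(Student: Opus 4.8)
The plan is to compute the Ricci tensor of the warped product $M = M_1 \times_f M_2$ by combining two ingredients: first, the Bishop--O'Neill curvature relations for a warped product (Proposition \ref{Curvature}), which reduce the curvature of $M$ to curvature data on $M_1$, $M_2$ and the Hessian/Laplacian of $f$; second, the O'Neill-type formulae for the Riemannian maps $\phi_1$ and $\phi_2$ together with Lemma \ref{Neill tensor}, which express the curvature of each $M_i$ in terms of the fibre curvature $\hat{Ric}^i$, the curvature $Ric^{\mathrm{range}\,\phi_{i*}}$ of the range, the fundamental tensors $T_i, A_i$, and the second fundamental form $\nabla\phi_{i*}$ of the map. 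Throughout I would fix local orthonormal frames: $\{e_a\}$ for $(\ker\phi_{1*})^\perp$, $\{u_\alpha\}$ for $\ker\phi_{1*}$, $\{\tilde e_b\}$ for $(\ker\phi_{2*})^\perp$, $\{\tilde u_\beta\}$ for $\ker\phi_{2*}$, noting that an orthonormal frame on $M_2$ picks up the factor $1/f$ relative to a $g_{M_2}$-orthonormal frame, which is exactly what produces the $f^2$-weights in parts (ii), (iv), (vi).

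The core of the argument is a case analysis matching the seven items. For (i) and (v) I would trace the curvature $R^M(X, U_1)V_1$ resp.\ $R^M(X, Y_1)Z_1$ over a full frame of $TM$: the $M_1$-directions contribute via the standard Riemannian-map Ricci formula for $\phi_1$ (splitting the trace over $\{e_a\}$ and $\{u_\alpha\}$ and invoking the Gauss--Codazzi-type identities for $T_1, A_1$ and the term $g_{N_1}((\nabla\phi_{1*})(\cdot,\cdot), \tau^{(\ker\phi_{1*})^\perp})$), while the $M_2$-directions contribute, by Proposition \ref{Curvature}(2)--(4), a term proportional to $H^f$ with multiplicity $m_2$ and, after tracing $R^M(X_2, U_1)X_2$, a divergence term $\Div_{h_1}(\nabla^M\ln f)$ on the fibre — here one uses that $\phi$ is Clairaut, so by the previous theorem the fibre of $\phi_1$ is totally umbilic with mean curvature $-\nabla^M\ln f$, which is what converts the raw Hessian-trace into the stated $\|\nabla^M\ln f\|^2$ and divergence terms. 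Items (ii), (iv), (vi) are the "$M_2$-leaf" analogues: trace over $TM$, use Proposition \ref{Curvature}(4)--(5) for the mixed and pure $M_2$ curvatures (producing $f\Delta f + f^2(m_2-1)\|\grad f\|^2$ from the $\|\nabla f\|^2$ term and the trace of $\nabla\nabla f$), and then apply the Riemannian-map Ricci identity for $\phi_2$ on the leaf; since $\phi_2$ has totally geodesic fibres (again by the previous theorem) the $T_2$-terms drop, which is why (ii), (iv), (vi) contain only $A_2$ and $\nabla\phi_{2*}$ contributions. Items (iii) is the mixed horizontal--vertical Ricci on $M_1$, obtained from the Codazzi identity $g((\nabla_{e_a}A_1)_{e_a}Y_1, U_1)$ plus the cross term $2g(T_1(U_1,e_a), A_1(e_a,Y_1))$ and the warping contribution $-m_2 f\, g_{M_1}(\nabla^{M_1}_{Y_1}\nabla f, U_1)$ coming from Proposition \ref{Curvature}(2). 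Finally (vii) follows because every mixed curvature between an $M_1$-factor and an $M_2$-factor that survives in Proposition \ref{Curvature} is of the form "(scalar)$\times$(vector in one factor)", so the relevant traces vanish by orthogonality of the $M_1$- and $M_2$-distributions and the block-diagonal form of the frame.

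The main obstacle I expect is bookkeeping rather than conceptual: one must carefully separate, in each trace, the contributions of the horizontal frame of $\phi_i$ from the vertical frame, keep track of the $1/f$ normalisation when passing between $g_M$- and $g_{M_2}$-orthonormal frames on the second factor, and correctly fold in the Clairaut conditions (totally umbilic fibres for $\phi_1$, totally geodesic for $\phi_2$) at exactly the point where a bare trace of $T_i$ or of the Hessian appears. A secondary subtlety is identifying the term $g_{N_1}((\nabla\phi_{1*})(Y_1,Z_1), \tau^{(\ker\phi_{1*})^\perp})$ with the right sign: this requires using that for a Riemannian map $(\nabla\phi_{1*})(X, Y)$ for horizontal $X, Y$ is the second fundamental form of $\mathrm{range}\,\phi_{1*}$ in $N_1$, together with the definition of the tension field $\tau$ restricted to the horizontal distribution. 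Once these normalisations are pinned down, each of (i)--(vii) is a finite computation; I would present (i) and (vi) in full as the representative "$M_1$" and "$M_2$" cases and indicate that (ii)--(v) and (vii) follow by the same method.
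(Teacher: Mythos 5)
Your proposal follows essentially the same route as the paper: trace the curvature over an orthonormal frame split into the vertical/horizontal parts of the lifts from $M_1$ and $M_2$, reduce via the warped-product curvature relations of Proposition \ref{Curvature} to curvature data on each factor plus Hessian/Laplacian terms in $f$, then apply the Riemannian-map curvature identities (the paper cites Lemma 3.1 of \cite{sahinRmaps} for exactly the Gauss--Codazzi-type decomposition you describe) with the Clairaut conditions — totally umbilical fibres for $\phi_1$ with mean curvature $-\nabla^M\ln f$ and totally geodesic fibres for $\phi_2$ — folded in at the trace stage, and dispose of (vii) by the vanishing of the mixed curvatures. Your remarks on the $1/f$ frame normalisation producing the $f^2$-weights and on where the Clairaut hypothesis enters match the paper's computation.
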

\begin{proof}
Consider the orthonormal basis  
$\{e_1,\ldots,e_{m_1-n_1},e_{m_1-n_1+1},\ldots,e_{m_1}\}$ of $\mathcal{L}(M_1),$ 
where $\{e_i:1\leq i\leq m_1-n_1\}$ and $\{e_a:m_1-n_1+1\leq a\leq m_1\}$ are the vertical and horizontal components of lift of vertical vector fields on $M_1.$ In similar way, we define the orthonormal basis of $\mathcal{L}(M_2)$  as
$\{\tilde{e_1},\ldots,\tilde{e}_{m_2-n_2}, \tilde{e}_{m_2-n_2+1},\ldots,\tilde{e}_{m_2}\},$ 
where $\{\tilde{e_j}:1\leq j\leq m_2-n_2\}$ and $\{\tilde{e_b}:m_2-n_2+1\leq b\leq m_2\}$ are the vertical and horizontal components. Then, we have
	\item \begin{equation}
	\begin{split}
		\Ric(U_1, V_1) = &\sum_{i =1}^{m_1-n_1}g_M(R(e_i, U_1)V_1, e_i)+\sum_{a =m_1-n_1+1}^{m_1}g_M(R(e_a, U_1)V_1, e_a)\\&+\sum_{j=1}^{m_2-n_2}g_M(R(\tilde{e_j}, U_1)V_1, \tilde{e_j})+\sum_{b =m_2-n_2+1}^{n_2}g_M(R(\tilde{{e_b}}, U_2) V_2, \tilde{{e_b}}).
	\end{split}
\end{equation}
Using Proposition \ref{Curvature}, we get
\begin{equation*}
	\begin{split}
		\Ric(U_1, V_1) = &\Ric^1(U_1, V_1)-\frac{1}{f}\sum_{j=1}^{m_2-n_2}g_M(\tilde{e_j}, \tilde{e_j})g_M(\nabla^{M_1}_{U_1}\grad f, {V_1})\\&-\frac{1}{f}\sum_{b = m_2-n_2+1}^{m_2}g_M(\tilde{e_b}, \tilde{e_b})g_M(\nabla^{M_1}_{U_1}\grad f, {V_1}),
	\end{split}
\end{equation*}
where $Ric^1(U_1, V_1)$ is the Ricci curvature of $M_1.$ This further reduces to
\begin{equation}
	\begin{split}
		\Ric(U_1, V_1) = &\Ric^1(U_1, V_1)-fm_2H^f(U_1, V_1).
	\end{split}
\end{equation}
Since $\phi$ is a Clairaut Riemannian warped product map, using the Clairaut condition in Lemma 3.1 of \cite{sahinRmaps}, we get 
\begin{equation}
	\begin{split}
		\Ric(U_1, V_1) = &\hat{\Ric}^1(U_1, V_1)-(m_1-n_1)||\nabla^M\ln f||^2g_{M_1}(U_1, V_1)-m_2fH^f(U_1, V_1)\\&-g_{M_1}(U_1, V_1)\Div(\nabla^M\ln f)+g_{M_1}\big(A_1(e_a, U_1), A_1(e_a, V_1)\big)\\&-fm_2H^f(U_1, V_1),
	\end{split}
\end{equation}
which proves $(i).$ For $(ii),$ we proceed as follows.\\
We have
\begin{equation}
	\begin{split}
		\Ric(U_2, V_2) = &\sum_{i =1}^{m_1-n_1}g_M(R(e_i, U_2)V_2, e_i)+\sum_{a= m_1-n_1+1}^{m_1}g_M(R(e_a, U_2)V_2, e_a)\\&+\sum_{j=1}^{m_2-n_2}g_M(R(\tilde{e_j}, U_2)V_2, \tilde{e_j})+\sum_{b =m_2-n_2+1}^{n_2}g_M(R(\tilde{{e_b}}, U_2) V_2, \tilde{{e_b}}).
	\end{split}
\end{equation}
Using Proposition \eqref{Curvature} in above equation, we get
\begin{equation}
	\begin{split}
		\Ric(U_2, V_2) = &f^2\Ric^2(U_2, V_2)-\Big(f\Delta f+f^2(m_2-1)||\nabla f||^2\Big)g_{M_2}(U_2, V_2),
	\end{split}
\end{equation}
where $\Ric^2(U_2, V_2)$ is the Ricci curvatres of $M_2$ with $g_{M_2}.$
Since $\phi$ is a Clairaut Riemannian warped product map, $\phi_{2}$ is a Riemannian map with totally geodesic fibres. Using this in Lemma 3.1 of \cite{sahinRmaps}, we get $(ii).$\\
Proceeding in a similar manner, we obtain $(iii)$ to $(vi).$\\
Next, we prove $(vii),$ we have
\begin{equation}
\begin{split}
\Ric(Y_1, Y_2) = &\sum_{i =1}^{m_1-n_1}g_M(R(e_i, Y_1)Y_2, e_i)+\sum_{a = m_1-n_1+1}^{m_1}g_M(R(e_a, Y_1)Y_2, e_a)\\&+\sum_{j = 1}^{m_2-n_2}g_M(R(\tilde{e_j}, Y_1)Y_2, \tilde{e_j})+\sum_{b = m_2-n_2+1}^{m_2}g_M(R(\tilde{{e_b}}, Y_1)Y_2, \tilde{{e_b}}). 
\end{split}
\end{equation}
Using Proposition \ref{Curvature} in above equation, we get $$\Ric(Y_1, Y_2) = 0.$$
In a similar manner, we obtain $$\Ric(Y_1, U_2) = \Ric(Y_2, U_1) = \Ric(U_1, U_2) = 0.$$

\end{proof}
\begin{theorem}
Let $\phi ( = \phi_1 \times \phi_2) : M (= M_1\times_f M_2)\rightarrow N (= N_1 \times_\rho N_2) $ be a Clairaut Riemannian
warped product map whose total manifold admits Ricci soliton with vertical potential vector field $\xi$ such that $(ker\phi_*)^\perp$ is integrable. Then the fibres of $\phi_{1}$ and $\phi_2$ admit an almost Ricci soliton with potential vector field $(\xi_1-m_2f\nabla f)$ and $\xi_2,$ respectively.
\end{theorem}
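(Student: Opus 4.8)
The plan is to restrict the Ricci soliton equation $\tfrac12 L_{\xi}g_M+\Ric+\lambda g_M=0$ of the total manifold to the vertical directions of $M_1$ and of $M_2$ in turn, and then to substitute the Ricci-curvature expressions of Theorem \ref{Ricci curv.}. First I would decompose the potential field as $\xi=\xi_1+\xi_2$ with $\xi_i\in\Gamma(\ker\phi_{i*})$, which is possible by Proposition \ref{Ismphm} since $\xi$ is vertical for $\phi$. Two preliminary observations will be needed: (a) because $\phi$ is Clairaut, its mean curvature vector field equals $-\nabla^M\ln f$, which is horizontal, so $\nabla^M f$, and hence $\nabla^{M_1}f$, is horizontal and $U_1(f)=0$ for every $U_1\in\Gamma(\ker\phi_{1*})$; (b) integrability of $(\ker\phi_*)^\perp$ together with the skew-symmetry of O'Neill's tensor $A$ forces $A_1\equiv 0$ and $A_2\equiv 0$, so every $A_1$- and $A_2$-term in Theorem \ref{Ricci curv.}(i)--(ii) drops out.

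Next I would evaluate the soliton equation on $U_1,V_1\in\Gamma(\ker\phi_{1*})$. Using Lemma \ref{lemma} one has $\nabla^M_{U_1}\xi=\nabla^{M_1}_{U_1}\xi_1+\tfrac{U_1(f)}{f}\xi_2=\nabla^{M_1}_{U_1}\xi_1$, and since $T_1(U_1,\xi_1)$ is horizontal while $V_1$ is vertical, $(L_\xi g_M)(U_1,V_1)=(L_{\xi_1}g_{M_1})(U_1,V_1)$, a fibre-intrinsic quantity. The one nontrivial manipulation is to recognize the Hessian term: from $\tfrac12 L_{\nabla f}g_{M_1}=H^f$ and $U_1(f)=0$ one gets $\tfrac12(L_{f\nabla f}g_{M_1})(U_1,V_1)=U_1(f)V_1(f)+fH^f(U_1,V_1)=fH^f(U_1,V_1)$, so (using that $m_2$ is constant) $-m_2fH^f(U_1,V_1)=\tfrac12(L_{-m_2f\nabla f}g_{M_1})(U_1,V_1)$. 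Substituting this, Theorem \ref{Ricci curv.}(i) with $A_1\equiv 0$, and $g_M(U_1,V_1)=g_{M_1}(U_1,V_1)$ into the soliton equation gives
\[
\tfrac12\big(L_{\xi_1-m_2f\nabla f}\,g_{M_1}\big)(U_1,V_1)+\hat{\Ric}^1(U_1,V_1)+\hat\lambda_1\,g_{M_1}(U_1,V_1)=0,
\]
with $\hat\lambda_1=\lambda-(m_1-n_1)\|\nabla^M\ln f\|^2-\Div(\nabla^M\ln f)$, which is the almost-Ricci-soliton equation on the fibres of $\phi_1$ with potential $\xi_1-m_2f\nabla f$.

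For the fibres of $\phi_2$ I would repeat the argument with $U_2,V_2\in\Gamma(\ker\phi_{2*})$. Here Lemma \ref{lemma} gives $\nabla^M_{U_2}\xi_1=\tfrac{\xi_1(f)}{f}U_2$, while $\tan(\nabla^M_{U_2}\xi_2)$ is the lift of $\nabla^{M_2}_{U_2}\xi_2$ and $\mathrm{nor}(\nabla^M_{U_2}\xi_2)=-g_M(U_2,\xi_2)\nabla^M\ln f\perp V_2$; using that $T_2(U_2,\xi_2)$ is horizontal one finds $(L_\xi g_M)(U_2,V_2)=2f\,\xi_1(f)\,g_{M_2}(U_2,V_2)+f^2(L_{\xi_2}g_{M_2})(U_2,V_2)$. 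Substituting this, Theorem \ref{Ricci curv.}(ii) with $A_2\equiv 0$, and $g_M(U_2,V_2)=f^2g_{M_2}(U_2,V_2)$, and dividing through by $f^2$, yields
\[
\tfrac12(L_{\xi_2}g_{M_2})(U_2,V_2)+\hat{\Ric}^2(U_2,V_2)+\hat\lambda_2\,g_{M_2}(U_2,V_2)=0,
\]
with $\hat\lambda_2=\lambda+\tfrac{\xi_1(f)}{f}-\tfrac{\Delta f}{f}-(m_2-1)\|\grad f\|^2$, i.e. an almost Ricci soliton on the fibres of $\phi_2$ with potential $\xi_2$.

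The step I expect to be most delicate is the bookkeeping of the two Lie-derivative decompositions on the warped product --- isolating which summands of $\nabla^M_{U_i}\xi$ survive the pairing, and rewriting $-m_2fH^f$ as half a Lie derivative, which genuinely uses the horizontality of $\nabla f$ forced by the Clairaut hypothesis --- together with correctly extracting $A_1\equiv A_2\equiv 0$ from the integrability assumption. Everything after that is direct substitution into Theorem \ref{Ricci curv.}, and the non-constancy of $\hat\lambda_1,\hat\lambda_2$ is precisely the reason one obtains almost Ricci solitons rather than genuine ones.
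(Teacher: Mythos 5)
Your proposal is correct and follows essentially the same route as the paper: decompose $\xi=\xi_1+\xi_2$, restrict the soliton equation to vertical directions, substitute the Ricci expressions of Theorem \ref{Ricci curv.}, use integrability of $(\ker\phi_*)^\perp$ to kill the $A$-terms, and absorb $-m_2fH^f$ into $\tfrac12 L_{-m_2f\nabla f}\,g_{M_1}$. Your treatment is in fact slightly cleaner than the paper's, since you evaluate the two vertical blocks separately, write the soliton functions $\hat\lambda_1,\hat\lambda_2$ explicitly, and use the Clairaut horizontality of $\nabla^M\ln f$ to discard the residual $U_1(f)V_1(f)$ correction that the paper carries as $m_2\|\mathcal{V}\nabla f\|^2 g_{M_1}(U_1,V_1)$.
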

\begin{proof}
Let $(M, g_M, \xi, \rho)$ be a Ricci soliton. Therefore
\begin{equation}\label{Ricci eqn vrt}
\frac{1}{2}(L_{\xi}g_M)(U, V) + \Ric(U, V)+\rho g_M(U, V) = 0,
\end{equation}
where $U, V, \xi\in\Gamma(ker\phi_*)$ and
$U = (U_1, U_2), V = (V_1, V_2), \xi = (\xi_1, \xi_2).$
One can easily show that
\begin{align}
(L_{\xi}g_M)(U, V) =& (L_{\xi_1}g_{M_1})(U_1, V_1)+f^2(L_{\xi_2}g_{M_2})(U_2, V_2)+2f\xi_1(f)g_{M_2}(U_2, V_2), \nonumber \\
\Ric(U, V) =& \Ric(U_1, V_1)+\Ric(U_2, V_2)+\Ric(U_2, V_1)+\Ric(U_1, V_2),\nonumber\\
g_M(U, V) =& g_{M_1}(U_1, V_1)+f^2g_{M_2}(U_2, V_2).	\nonumber
\end{align}
Making use of above equations in \eqref{Ricci eqn vrt}, we get
\begin{equation}\label{4.24}
\begin{split}
&\frac{1}{2} (L_{\xi_1}g_{M_1})(U_1, V_1)+\frac{1}{2} f^2(L_{\xi_2}g_{M_2})(U_2, V_2)+f\xi_1(f)g_2(U_2, V_2)+\Ric(U_1, V_1)\\&+\Ric(U_2, V_2)+\Ric(U_1, V_2)+\Ric(U_2, V_1)+\rho g_{M_1}(U_1, V_1)+\rho f^2g_{M_2}(U_2, V_2)= 0.		
\end{split}
	\end{equation}
	Using Theorem \ref{Ricci curv.} in \eqref{4.24}, we obtain
	\begin{equation}\label{4.25}
		\begin{split}
			&\frac{1}{2} (L_{\xi_1}g_{M_1})(U_1, V_1)+\frac{1}{2} f^2(L_{\xi_2}g_{M_2})(U_2, V_2)+f\xi_1 (f)g_{M_2}(U_2, V_2)+\hat{\Ric}^1(U_1, V_1)\\&-(m_1-n_1)||\nabla^M \ln f||^2g_{M_1}(U_1, V_1)-g_{M_1}(U_1, V_1)\Div_{h_1}(\nabla^M \ln f)+f^2\hat{\Ric}^2(U_2, V_2)\\&+g_{M_1}\big(A_1(e_a, U_1) A_1(e_a, V_1)\big)-m_2fH^f(U_1, V_1)-\big(f\Delta f+f^2(m_2-1)||\nabla f||^2\big)\\&g_{M_2}(U_2, V_2) +f^2g_{M_2}\big(A_2(\tilde{e_b}, U_2), A_2(\tilde{e_b}, V_2)\big)+\rho g_{M_1}(U_1, V_1)+\rho f^2g_{M_2}(U_2, V_2) = 0.		
		\end{split}
	\end{equation}
	Since $(ker\phi_*)^\perp$ is integrable, \eqref{4.25} reduces to
	\begin{equation*}
		\begin{split}
			&\frac{1}{2} (L_{{\xi_1}-m_2f\nabla f}g_{M_1})(U_1, V_1)+f^2\frac{1}{2}(L_{\xi_2}g_{M_2})(U_2, V_2)+f\xi_1 (f)g_2(U_2, V_2)+\hat{\Ric}^1(U_1, V_1)\\&-(m_1-n_1)||\nabla^M \ln f||^2g_{M_1}(U_1, V_1)-g_{M_1}(U_1, V_1)\Div_{h_1}(\nabla^M lnf)+f^2\hat{\Ric}^2(U_2, V_2)\\&+\rho g_{M_1}(U_1, V_1)+\rho f^2g_{M_2}(U_2, V_2)-\big(f\Delta f+f^2(m_2-1)||\nabla f||^2\big)g_{M_2}(U_2, V_2)\\&+m_2||\mathcal{V}\nabla f||^2g_{M_1}(U_1, V_1) = 0		
		\end{split}
	\end{equation*}
	which proves the assertion.
\end{proof}
\begin{theorem}
Let $\phi ( = \phi_1 \times \phi_2) : M (= M_1\times_f M_2)\rightarrow N (= N_1 \times_\rho N_2)$ be a totally geodesic Clairaut Riemannian warped product map whose total manifold admits Ricci soliton with horizontal potential vector field $\xi$ such that $(ker\phi_*)^\perp$ is integrable. Then the $range\phi_{1*}$ is an almost Ricci soliton with potential vector field $\xi_1-(\frac{m_1-n_1}{f}+m_2f)\nabla f$ and $range\phi_{2*}$ is an almost Ricci soliton with potential vector field $\xi_2$ .
\end{theorem}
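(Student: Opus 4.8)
The plan is to follow the same strategy as in the proof of the preceding theorem, but now to restrict the soliton equation to the horizontal distribution and to invoke parts $(v)$–$(vii)$ of Theorem \ref{Ricci curv.} in place of $(i)$–$(iv)$. Write $\xi = (\xi_1, \xi_2)$ with $\xi_1 \in \Gamma((\ker\phi_{1*})^\perp)$ and $\xi_2 \in \Gamma((\ker\phi_{2*})^\perp)$, and take $Y = (Y_1, Y_2),\, Z = (Z_1, Z_2) \in \Gamma((\ker\phi_*)^\perp)$. The Ricci soliton condition reads
\[
\tfrac12 (L_\xi g_M)(Y, Z) + \Ric(Y, Z) + \rho\, g_M(Y, Z) = 0 .
\]
First I would expand each of the three summands with respect to the warped product structure. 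Using the identity analogous to the one already used for vertical fields, $(L_\xi g_M)(Y,Z) = (L_{\xi_1}g_{M_1})(Y_1,Z_1) + f^2(L_{\xi_2}g_{M_2})(Y_2,Z_2) + 2f\,\xi_1(f)\,g_{M_2}(Y_2,Z_2)$; likewise $g_M(Y,Z) = g_{M_1}(Y_1,Z_1) + f^2 g_{M_2}(Y_2,Z_2)$; and, by Theorem \ref{Ricci curv.}$(vii)$, the mixed terms of $\Ric(Y,Z)$ vanish, so $\Ric(Y,Z) = \Ric(Y_1,Z_1) + \Ric(Y_2,Z_2)$.

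Next I would substitute the expressions for $\Ric(Y_1,Z_1)$ and $\Ric(Y_2,Z_2)$ from Theorem \ref{Ricci curv.}$(v)$ and $(vi)$, and simplify them using the three hypotheses in turn. Since $\phi$ is totally geodesic, every term containing $(\nabla\phi_{1*})$ or $(\nabla\phi_{2*})$ drops out, and the terms $\Ric^{\text{range}\,\phi_{i*}}(\phi_{i*}Y_i,\phi_{i*}Z_i)$ are isolated; since $(\ker\phi_*)^\perp$ is integrable, the O'Neill tensors $A_1,A_2$ vanish on horizontal arguments, so all terms built from $A_i$ and $(\nabla^{M_i} A_i)$ disappear; and since $\phi$ is Clairaut, the fibres of $\phi_1$ are totally umbilical with mean curvature $-\nabla^M\ln f$ and the fibres of $\phi_2$ are totally geodesic, which is what lets the remaining $T_1$-terms be re-expressed through $\nabla^M\ln f$. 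What then survives in the $M_1$-part, besides $\Ric^{\text{range}\,\phi_{1*}}$, are the two Hessian-type terms $-(m_1-n_1)\,g_{M_1}(\nabla^{M_1}_{Y_1}\nabla^M\ln f, Z_1)$ and $-m_2 f\, H^f(Y_1,Z_1)$; in the $M_2$-part, besides $f^2\Ric^{\text{range}\,\phi_{2*}}$, only the pure-metric term $-(f\Delta f + f^2(m_2-1)\|\grad f\|^2)\,g_{M_2}(Y_2,Z_2)$.

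Finally I would reorganise. Using $H^h = \tfrac12 L_{\nabla h} g$, the two surviving Hessian-type terms in the $M_1$-part combine into $-\tfrac12 \big(L_{(\frac{m_1-n_1}{f}+m_2 f)\nabla f}\,g_{M_1}\big)(Y_1,Z_1)$, with the residual $\|\nabla f\|^2$-type terms folded into a non-constant function, so that $\tfrac12 (L_{\xi_1}g_{M_1})(Y_1,Z_1)$ plus these terms equals $\tfrac12\big(L_{\xi_1 - (\frac{m_1-n_1}{f}+m_2 f)\nabla f}\,g_{M_1}\big)(Y_1,Z_1)$; the $M_2$-part contains no Hessian term, so its potential stays $\xi_2$. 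Since the resulting identity holds for all horizontal $Y,Z$ and splits into an $M_1$-piece and an $f^2$-weighted $M_2$-piece, testing it on vectors tangent to $M_1$ and to $M_2$ separately yields two independent equations, each of the almost-Ricci-soliton form on $\text{range}\,\phi_{1*}$ and on $\text{range}\,\phi_{2*}$, with potential vector fields $\xi_1 - \big(\tfrac{m_1-n_1}{f}+m_2 f\big)\nabla f$ and $\xi_2$ and with non-constant soliton functions collecting the $\rho$, $\Delta f$, $\|\grad f\|^2$ and $\xi_1(f)$ contributions. The main obstacle is purely one of bookkeeping: tracking exactly which $f$-derivative terms are absorbed into the potential field via the Lie-derivative identity and which remain in the non-constant soliton function, and checking that the integrability hypothesis indeed annihilates every $A$-type contribution appearing in $(v)$ and $(vi)$.
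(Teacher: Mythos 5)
Your proposal follows essentially the same route as the paper: restrict the soliton equation to horizontal fields, expand the Lie derivative and metric over the warped product, substitute parts $(v)$--$(vii)$ of the Ricci curvature theorem, kill the $(\nabla\phi_{i*})$ and $A_i$ terms via the totally geodesic and integrability hypotheses, and absorb the two surviving Hessian-type terms into the shifted potential $\xi_1-(\frac{m_1-n_1}{f}+m_2f)\nabla f$ while the leftover $\|\nabla f\|^2$-type terms go into the non-constant soliton function. This matches the paper's argument step for step, so no further comparison is needed.
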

\begin{proof}
Let $(M, g_M, \xi, \rho)$ be a Ricci soliton. Then
\begin{equation}\label{Ricci eqn}
\frac{1}{2}(L_{\xi}g_M)(Y, Z) + Ric(Y, Z)+\rho g_M(Y, Z) = 0,
\end{equation}
where $Y = (Y_1, Y_2), Z = (Z_1, Z_2)~ and ~ \xi = (\xi_1, \xi_2)\in\Gamma(ker\phi_*)^\perp.$
It can be casily shown that \begin{align*}
(L_{\xi}g_M)(Y, Z) = & (L_{\xi_1}g_{M_1})(Y_1, Z_1)+f^2(L_{\xi_2}g_{M_2})(Y_2, Z_2)+2f(\xi_1(f))g_{M_2}(Y_2, Z_2),\\
\Ric(Y, Z) =& \Ric(Y_1, Z_1)+\Ric(Y_2, Z_2)+\Ric(Y_1, Z_2)+\Ric(Y_2, Z_1),\\	
g_M(Y, Z) = &g_{M_1}(Y_1, Z_1)+f^2g_{M_2}(Y_2, Z_2).	
\end{align*}
Substituting above equations in \eqref{Ricci eqn}, we get
	\begin{equation}\label{4.27}
		\begin{split}
			&\frac{1}{2}(L_{\xi_1}g_{M_1})(Y_1, Z_1)+\frac{1}{2}f^2(L_{\xi_2}g_{M_2})(Y_2, Z_2) + f\xi_1(f)g_{M_2}(Y_2, Z_2) + \Ric(Y_1, Z_1)\\& + \Ric(Y_2, Z_2)+\rho g_{M_1}(Y_1, Z_1) +\rho f^2g_{M_2}(Y_2, Z_2) = 0. 
		\end{split}
	\end{equation}
	Using Theorem \ref{Ricci curv.} in \eqref{4.27}, we get
	\begin{equation}\label{4.28}
		\begin{split}
			&\frac{1}{2}(L_{\xi_1}g_{M_1})(Y_1, Z_1)+\frac{1}{2}f^2(L_{\xi_2}g_{M_2})(Y_2, Z_2)+f\xi_1(f)g_{M_2}(Y_2, Z_2)-m_2fH^f(Y_1, Z_1)\\&-(m_1-n_1)g_{M_1}(\nabla^{M_1}_{Y_1}\nabla^M \ln f, Z_1)+g_{M_1}\big((\nabla^{M_1}_{e_i}A_1)_{Y_1}Z_1, e_i\big)-g_{M_1}\big(T_1(e_i, Y_1),\\& T_1(e_i, Z_1)\big)+g_{M_1}\big(A_1(Y_1, e_i) A_1(Z_1, e_i)\big)+g_{N_1}\big((\nabla\phi_{1*})(Y_1, Z_1), \tau^{ker\phi_{1*}^\perp}\big)\\&-g_{N_1}\big((\nabla\phi_{1*})(Y_1, e_a), (\nabla \phi_{1*})(Z_1, e_a)\big)+f^2g_{N_2}\big((\nabla\phi_{2})_{*}(Y_2, Z_2), \tau^{\ker\phi_{2*}^\perp}\big)\\&+f^2\Ric^{range\phi_{2*}}(\phi_{2*}Y_2, \phi_{2*}Z_2)+\Ric^{range\phi_{1*}}(\phi_{1*}Y_1, \phi_{1*}Z_1)+\rho g_{M_1}(Y_1, Z_1)\\&-\big(f\Delta f+f^2(m_2-1)||\nabla f||^2\big)g_{M_2}(Y_2, Z_2)-f^2g_{N_2}\big((\nabla\phi_{2})_*(Y_2, \tilde{e_b}), \nabla\phi_{2})_*(Z_2, \tilde{e_b})\big)\\&+f^2g_{M_2}(A_2(Y_2, \tilde{e_j}), A_2(Z_2, \tilde{e_j}))+f^2g_{M_2}\big((\nabla^{M_2}_{\tilde{e_j}}A_2)_{Y_2}Z_2, \tilde{e_j})+\rho f^2g_{M_2}(Y_2, Z_2) = 0. 
		\end{split}
	\end{equation}
	Since $\phi$ is a totally geodesic Clairaut warped product
	 Riemannian map i.e., $(\nabla \phi_*) = 0$ and $(ker\phi_*)^\perp$ is an integrable distribution, i.e., $A_Y Z = 0,$ \eqref{4.28} reduces to
	\begin{equation*}
		\begin{split}
			&\frac{1}{2}(L_{(\xi_1-(m_1-n_1)\nabla^M\ln f-m_2f\nabla f )}g_{M_1})(Y_1, Z_1)+\frac{1}{2}f^2(L_{\xi_2}g_{M_2})(Y_2, Z_2)+\rho g_{M_1}(Y_1, Z_1)\\&+\Ric^{range\phi_{1*}}(\phi_{1*}Y_1, \phi_{1*}Z_1)+f^2\Ric^{range\phi_{2*}}(\phi_{2*}Y_2, \phi_{2*}Z_2)+\rho f^2g_{M_2}(Y_2, Z_2)\\&+f(\xi_1(f))g_{M_2}(Y_2, Z_2)-\big(f\Delta f+f^2(m_2-1)||\nabla f||^2\big)g_{M_2}(Y_2, Z_2)\\&-(m_1-n_1)g_{M_1}(Y_1, Z_1)||h\nabla^M \ln f||^2+m_2g_{M_1}(Y_1, Z_1)||h\nabla f||^2 = 0
		\end{split}
	\end{equation*}
which proves the desired result.	
\end{proof}
\begin{theorem}
	Let  $\phi ( = \phi_1 \times \phi_2) : M (= M_1\times_f M_2)\rightarrow N (= N_1 \times_\rho N_2)$ be a Clairaut Riemannian warped product map. Then, for any vertical vector field $V$ and horizontal vector field $Y,$ we have the following Bochner type formulae
	\begin{equation}\label{Bdiv V}
		\begin{split}
			\Div(L_Vg_M)V =& \frac{1}{2}\Delta^{\nu_1}||V_1||^2+\Ric^{ker\phi_{1*}}(V_1, V_1)+\nabla^{M_1}_{V_1}\Div_{\nu_1}(V_1)-||\nabla ^{M_1}_{\nu_1}V_1||^2\\&-2\big(V_1(f)\big)^2m_2+\frac{1}{2}\Delta^{h_1}||V_1||^2-||\nu_1\nabla^{M_1}_{h_1}V_1||^2-2fV_1(f)\Div_{\nu_2}(V_2)\\&-g_{M_1}(V_1, V_1)\Div_{h}(\nabla ln f)-g_{M_1}(\nu_1\nabla^{M_1}_{e_a}V_1, V_1)e_a(lnf)\\&+f^2\Big(\frac{1}{2}\Delta^{\nu_2}||V_2||^2+\nabla^{M_2}_{V_2}\Div_{\nu_2}(V_2)+\Ric^{\ker\phi_{2*}}(V_2, V_2)-||\nabla^{M_2}_{\nu_2}V_2||^2\\&+ \frac{1}{2}\Delta^{h_2}||V_2||^2-||\nabla^{M_2}_{h_2}V_2||^2\Big),
		\end{split}
	\end{equation}
	and 
	\begin{equation}\label{Bdiv X}
		\begin{split}
			\Div(L_Y g_M)Y = & -(m_1-n_1)Y_1(f)^2+\frac{1}{2}\Delta^{h_1}||Y_1||^2+\Ric^{(\ker\phi_{1*})^\perp}(Y_1, Y_1)+\nabla^{M_1}_{Y_1}\Div_{h_1}Y_1\\&f^2\Big(\frac{1}{2}\Delta^{h_2}||Y_2||^2-||\nabla^{M_2}_{h_2}Y_2||^2+\Ric^{(\ker\phi_{2*})^\perp}(Y_2, Y_2)+\nabla^{M_2}_{Y_2}\Div_{h_2}Y_2\Big)\\&-||\nabla^{M_1}_{h_1}Y_1||^2 -2(Y_1(f))^2m_2-2fY_1(f)\Div_{h_2}Y_2,
		\end{split}
	\end{equation}
where $\Delta^{v_1}, \Delta^{v_2}, \Delta^{h_1}$ and $\Delta^{h_2}$ are the Laplace operators on the vertical and horizontal space of $M_1$ and $M_2,$ respectively.
\end{theorem}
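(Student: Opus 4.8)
The plan is to deduce both identities from the classical Bochner formula for a vector field $W$ on a Riemannian manifold,
$$\Div(L_W g_M)W=\tfrac12\,\Delta^{M}\|W\|^{2}+\Ric(W,W)+\nabla^{M}_{W}(\Div^{M}W)-\|\nabla^{M}W\|^{2},$$
applied to the total space $(M,g_M)$ once with $W=V$ vertical, giving \eqref{Bdiv V}, and once with $W=Y$ horizontal, giving \eqref{Bdiv X}. The remainder of the argument is the decomposition of each of the four terms on the right along the warped product structure, together with substitution of the earlier results.

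First I would write $W=W_1+W_2$ with $W_1\in\Gamma(TM_1)$, $W_2\in\Gamma(TM_2)$ (Proposition \ref{Ismphm}), and use $g_M=g_{M_1}+f^{2}g_{M_2}$ together with the warped connection formulae of Lemma \ref{lemma}. The ingredients to compute are: $\|W\|^{2}=\|W_1\|^{2}+f^{2}\|W_2\|^{2}$; the divergence, where the fact that the volume element of $M_1\times_f M_2$ is $f^{m_2}$ times the product volume gives $\Div^{M}W_1=\Div^{M_1}W_1+m_2\,W_1(\ln f)$ and $\Div^{M}W_2=\Div^{M_2}W_2$, hence also the companion expansions of $\Delta^{M}\|W\|^{2}$ and $\nabla^{M}_{W}(\Div^{M}W)$; and $\|\nabla^{M}W\|^{2}$, which Lemma \ref{lemma} expands into the $M_1$- and $M_2$-intrinsic squared norms plus the warping cross terms $-2m_2\big(W_1(f)\big)^{2}$, $-2f\,W_1(f)\,\Div(W_2)$ and the $e_a(\ln f)$ contribution. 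Splitting each $M_i$-intrinsic quantity into its vertical and horizontal parts then accounts for all the Laplacian, divergence and norm terms of \eqref{Bdiv V}--\eqref{Bdiv X}.

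Next I would substitute $\Ric(W,W)=\Ric(W_1,W_1)+\Ric(W_2,W_2)$ --- the mixed term vanishing by Theorem \ref{Ricci curv.}(vii) --- using parts (i),(ii) of Theorem \ref{Ricci curv.} in the vertical case and parts (v),(vi) in the horizontal case; this introduces $\hat{Ric}^{1}$ and $f^{2}\hat{Ric}^{2}$ (the $\Ric^{\ker\phi_{1*}},\Ric^{\ker\phi_{2*}}$ of the statement), the terms $m_2 fH^{f}$ and $\Div(\nabla^{M}\ln f)$, and the $A_1,A_2,T_1$ contractions. Finally the Clairaut hypothesis --- that, by the previous theorem, $\phi_1$ has totally umbilical fibres, $\phi_2$ has totally geodesic fibres, and the mean curvature vector field of $\phi$ equals $-\nabla^{M}\ln f$ --- is used to rewrite the $T$-contractions and to recombine the $A$-contractions with the vertical and horizontal pieces of $\|\nabla^{M}W\|^{2}$ via $A_1(e_a,W_1)=\mathcal H_1\nabla^{M_1}_{e_a}W_1$ and its analogues, after which the whole expression collapses to the two displayed formulae.

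The main obstacle will be the bookkeeping of the warping function: one must keep scrupulous track of which metric ($g_{M_1}$, $g_{M_2}$ or the warped $g_M$) each Laplacian, divergence, gradient and Hessian is taken with respect to, of the normalisation of the orthonormal frames on the warped factor $M_2$, and of how the factor $f^{2}$ in $g_{M_2}$ and the factor $f^{m_2}$ in the volume form propagate through $\Delta^{M}$, $\Div^{M}$ and $\nabla^{M}$. A secondary delicate point --- and the reason the statement is valid without assuming $(\ker\phi_*)^\perp$ integrable, so that the $A$-tensors survive --- is checking that the cross terms produced by $\nabla^{M}_{W}(\Div^{M}W)$ and by $\|\nabla^{M}W\|^{2}$ combine precisely into the stated $m_2$- and $f$-weighted contributions; once the normalisations are pinned down the remaining calculation is lengthy but routine.
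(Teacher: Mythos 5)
Your starting point is sound --- the identity $\Div(L_Wg_M)W=\tfrac12\Delta^{M}\|W\|^{2}+\Ric(W,W)+\nabla^{M}_{W}(\Div^{M}W)-\|\nabla^{M}W\|^{2}$ is the classical Bochner formula and both sides are frame-independent, so decomposing its four global terms along the warped product must ultimately reproduce \eqref{Bdiv V} and \eqref{Bdiv X} --- but this is a genuinely different organization of the computation from the paper's. The paper never invokes the ambient Bochner identity or the Ricci curvature theorem at all: it splits $\Div(L_Vg_M)V$ into the four blocks $\Div(L_{V_i}g_M)V_j$, expands each block as $(\nabla^M_{E}L_{V_i}g_M)(V_j,E)$ summed separately over the four sub-frames $\{e_i\},\{e_a\},\{\tilde e_j\},\{\tilde e_b\}$ (parallel at the point), and reads off a ``mini-Bochner'' identity per block; the intrinsic curvatures $\Ric^{\ker\phi_{1*}}$, $\Ric^{(\ker\phi_{1*})^\perp}$, etc.\ then appear directly from the curvature commutator restricted to the relevant sub-frame, and the warping terms $-2m_2(V_1(f))^2$, $-2fV_1(f)\Div_{\nu_2}(V_2)$, $-g_{M_1}(V_1,V_1)\Div_{h}(\nabla\ln f)$ come out of Lemma \ref{lemma} applied inside each block. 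What the paper's route buys is that no cancellation between large intermediate expressions is ever needed.

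The concrete weak point of your route is the $\Ric(W,W)$ term. The target formulae are written in terms of the \emph{intrinsic} Ricci curvatures of the distributions ($\Ric^{\ker\phi_{i*}}$, $\Ric^{(\ker\phi_{i*})^\perp}$), whereas Theorem \ref{Ricci curv.} expresses the \emph{ambient} $\Ric(U_1,V_1)$ and $\Ric(Y_1,Z_1)$ in terms of $\hat{\Ric}^1$ and $\Ric^{\text{range}\,\phi_{1*}}$ plus a long list of corrections ($\|\nabla^M\ln f\|^2$, $H^f$, the $A_1$- and $T_1$-contractions, and in the horizontal case the $(\nabla\phi_{1*})$- and $\tau$-terms). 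None of $\Ric^{\text{range}\,\phi_{1*}}$, $H^f$, $(\nabla\phi_{1*})$ or $\tau$ survives into \eqref{Bdiv X}, so on your route every one of these must be shown to cancel against pieces of $\tfrac12\Delta^M\|W\|^2$, $\nabla^M_W(\Div^M W)$ and $-\|\nabla^M W\|^2$, or to be absorbed when converting $\Ric^{\text{range}\,\phi_{1*}}(\phi_{1*}Y_1,\phi_{1*}Z_1)$ back to $\Ric^{(\ker\phi_{1*})^\perp}(Y_1,Y_1)$ via the Gauss-type relations for a Riemannian map. You assert this collapse but do not exhibit it, and it is by far the largest part of the work on your route; I would either carry out that cancellation explicitly or switch to the paper's block-by-block expansion, where the intrinsic Ricci terms appear natively and the issue never arises.
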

\begin{proof}
	Let $V = (V_1, V_2)$ be a vertical vector field with parallel orthonormal frames in a neighbourhood of a point $p =(p_1, p_2)\in M.$ Then, one can write
	\begin{equation}\label{div V}
		\Div(L_Vg_M)V = \Div(L_{V_1}g_M)V_1+\Div(L_{V_1}g_M)V_2+\Div(L_{V_2}g_M)V_1+\Div(L_{V_2}g_M)V_2.		
	\end{equation}
	Now,
	\begin{equation}\label{div V_1}
		\begin{split}
			\Div(L_{V_1}g_M)V_1 = &(\nabla^M_{e_i}L_{V_1}g_M)(V_1, e_i)+(\nabla^M_{e_a}L_{V_1}g_M)(V_1, e_a)\\&+(\nabla^M_{\tilde{e_j}}L_{V_1}g_M)(V_1, {\tilde{e_j}})+(\nabla^M_{\tilde{e_b}}L_{V_1}g_M)(V_1, {\tilde{e_b}}).
		\end{split}
	\end{equation} 
	Further, $$(\nabla^M_{e_i}L_{V_1}g_M)(V_1, e_i) = \nabla^M_{e_i}(L_{V_1}g_M(V_1, e_i))-L_{V_1}g_M(
	\nabla^M_{e_i}V_1, e_i)-L_{V_1}g_M(\nabla^M_{e_i}e_i, V_1)$$
	as $\{e_i\}$ is a orthonormal frame in a neighbourhood of $p$ which is parallel at $p.$ After some computations, we get
	\begin{equation*}
		\begin{split}
			(\nabla^M_{e_i}L_{V_1}g_M)(V_1, e_i) = & \nabla^M_{e_i}(g_M(V_1, \nabla^M_{e_i}{V_1}))+g_M(\nabla^M_{e_i}\nabla_{V_1}V_1, e_i)-g_M(\nabla^M_{\nabla^M_{e_i}V_1}V_1, e_i)\\&-g_M(\nabla^M_{e_i}V_1, \nabla^M_{e_i}V_1)		
		\end{split}
	\end{equation*}
	which implies
	\begin{equation}\label{V_1}
		(\nabla^M_{e_i}L_{V_1}g_M)(V_1, e_i) = \frac{1}{2}\Delta^{\nu_1}||V_1||^2+\Ric^{\ker\phi_{1*}}(V_1, V_1)+\nabla^{M_1}_{V_1}\Div_{\nu_1}(V_1)-||\nabla ^{M_1}_{\nu_1}V_1||^2.	
	\end{equation}
	By similar computations, we get
	\begin{equation}\label{H_1}
		\begin{split}
			(\nabla^M_{e_a}L_{V_1}g_M)(V_1, e_a) =& \frac{1}{2}\Delta^{h_1}||V_1||^2-||\nu_1\nabla^{M_1}_{h_1}V_1||^2-g_{M_1}(V_1, V_1)\Div_{h_1}(\nabla lnf)\\&-g_{M_1}(\nu_1\nabla^{M_1}_{e_a}V_1, V_1)e_a(lnf),
		\end{split}
	\end{equation}
	\begin{equation}\label{V_2}
		(\nabla^M_{\tilde{e_j}}L_{V_1}g_M)(V_1, {\tilde{e_j}}) = -2\big(V_1(f)\big)^2(m_2-n_2),
	\end{equation}
	and
	\begin{equation}\label{H_2}
		(\nabla^M_{\tilde{e_b}}L_{V_1}g_M)(V_1, {\tilde{e_b}}) = -2\big(V_1(f)\big)^2n_2.
	\end{equation}
	Making use of \eqref{V_1} - \eqref{H_2} in \eqref{div V_1}, we obtain
	\begin{equation}\label{LV1V1}
		\begin{split}
			\Div(L_{V_1}g_M)V_1 &= \frac{1}{2}\Delta^{\nu_1}||V_1||^2+\Ric^{\ker\phi_{1*}}(V_1, V_1)+\nabla^{M_1}_{V_1}\Div_{\nu_1}(V_1)-||\nabla ^{M_1}_{\nu_1}V_1||^2\\&-2\big(V_1(f)\big)^2m_2+\frac{1}{2}\Delta^{h_1}||V_1||^2-||\nu_1\nabla^{M_1}_{h_1}V_1||^2-g_{M_1}(V_1, V_1)\Div_{h_1}(\nabla^M \ln f)\\&-g_{M_1}(\nu_1\nabla^{M_1}_{e_a}V_1, V_1)e_a(\ln f).
		\end{split}
	\end{equation}
	By similar computations, we get
	\begin{equation}\label{LV1V2}
		\Div(L_{V_1}g_M)V_2 = 0,	
	\end{equation}
	\begin{equation}\label{LV2V1}
		\Div(L_{V_2}g_M)V_1 = -2fV_1(f)\Div_{\nu_2}(V_2),
	\end{equation}
	and
	\begin{equation}\label{LV2V2}
		\begin{split}
			\Div(L_{V_2}g_M)V_2 =& f^2\Big(\frac{1}{2}\Delta^{\nu_2}||V_2||^2+\nabla^{M_2}_{V_2}\Div_{\nu_2}(V_2)+\Ric^{\ker\phi_{2*}}(V_2, V_2)-||\nabla^{M_2}_{\nu_2}V_2||^2\\&+ \frac{1}{2}\Delta^{h_2}
			||V_2||^2-||\nabla^{M_2}_{h_2}V_2||^2\Big).	
		\end{split}
	\end{equation}
Using equations \eqref{LV1V1} to \eqref{LV2V2} and \eqref{div V}, we get \eqref{Bdiv V}.\\\\
		Let $Y = (Y_1, Y_2)$ be a horizontal vector field with parallel orthonormal frames in a neighbourhood of a point $p =(p_1, p_2)\in M.$ Then,
	\begin{equation}\label{div X}
		\Div(L_Y g_M)Y = \Div(L_{Y_1} g_M)Y_1+\Div(L_{Y_1} g_M)Y_2+\Div(L_{Y_2}g_M)Y_1+\Div(L_{Y_2}g_M)Y_2.
	\end{equation}
	Since 
	\begin{equation}\label{LY1Y1}
		\begin{split}
			\Div(L_{Y_1} g_M)Y_1 = & (\nabla^M_{e_i}L_{Y_1}g_M)(Y_1, e_i)+(\nabla^M_{e_a}L_{Y_1}g_M)(Y_1, e_a)+(\nabla^M_{\tilde{e_j}}L_{Y_1}g_M)(Y_1, \tilde{e_j})\\&+(\nabla^M_{\tilde{e_b}}L_{Y_1}g_M)(Y_1, \tilde{e_b}),
		\end{split}
	\end{equation}
	solving all the terms of above equation, we get
	\begin{equation}\label{v1X_1}
		(\nabla^M_{e_i}L_{Y_1}g_M)(Y_1, e_i) = -(m_1-n_1)Y_1(f)^2,
	\end{equation}
	\begin{equation}\label{h1X_1}
		\begin{split}
			(\nabla^M_{e_a}L_{Y_1}g_M)(Y_1, e_a) = & \frac{1}{2}\Delta^{h_1}||Y_1||^2+\Ric^{({\ker\phi_{1*}})^\perp}(Y_1, Y_1)+\nabla^{M_1}_{Y_1}\Div_{h_1}Y_1-\\&||\nabla^{M_1}_{h_1}Y_1||^2,		
		\end{split}
	\end{equation}
	\begin{equation}\label{Lv2v2}
		(\nabla^M_{\tilde{e_j}}L_{Y_1}g_M)(Y_1, \tilde{e_j}) = -2(Y_1(f))^2(m_2-n_2),
	\end{equation}
	and
	\begin{equation}\label{h2X_2}
		(\nabla^M_{\tilde{e_b}}L_{Y_1}g_M)(Y_1, \tilde{e_b}) =-2(Y_1(f))^2n_2.
	\end{equation}
	Substituting the values from \eqref{v1X_1} - \eqref{h2X_2} in \eqref{LY1Y1}, we get
	\begin{equation}\label{div X_1}
		\begin{split}
			\Div(L_{Y_1}g_M)Y_1 =& -(m_1-n_1)Y_1(f)^2+\frac{1}{2}\Delta^{h_1}||Y_1||^2+\Ric^{(\ker\phi_{1*})^\perp}(Y_1, Y_1)\\&+\nabla^{M_1}_{Y_1}\Div_{h_1}Y_1-||\nabla^{M_1}_{h_1}Y_1||^2 -2(Y_1(f))^2(m_2-n_2)-2(Y_1(f))^2n_2.	
		\end{split}
	\end{equation}
	By similar computations, we get 
	\begin{equation}\label{div X_1X_2}
		\Div(L_{Y_1}g_M)Y_2 = 0,
	\end{equation} 
	\begin{equation}\label{div X_2X_1}
		\Div(L_{Y_2}g_M)Y_1 = -2Y_1(f)f\Div_{h_2}Y_2,
	\end{equation}
	and
	\begin{equation}\label{div X_2}
		\begin{split}
			\Div(L_{Y_2}g_M)Y_2 =& f^2\Big(\frac{1}{2}\Delta^{h_2}||Y_2||^2-||\nabla^{M_2}_{h_2}Y_2||^2+\Ric^{(\ker\phi_{2*})^\perp}(Y_2, Y_2)+\nabla^{M_2}_{Y_2}\Div_{h_2}Y_2\Big).
		\end{split}
	\end{equation}
	Using equations \eqref{div X_1} to \eqref{div X_2} and \eqref{div X}, we get \eqref{Bdiv X}. 
\end{proof}
\begin{example}
	Let $M_1 = \{(x_1, x_2, x_3, x_4); x_i\in\mathbb{R}^{*}\}$ be a Riemannian manifold whose Riemannian metric is given by $g_{M_1} = e^{-2x_3}dx_1^2+e^{-2x_3}dx_2^2+dx_3^2+dx_4^2.$ Let $N_1 = \{(y_1, y_2, y_3, y_4); y_i\in\mathbb{R}, y_3\neq 0\}$ be a Riemannian manifold whose Riemannian metric is given by $g_{N_1} = dy_1^2+dy_2^2+dy_3^2+dy_4^2.$ We define a map $\phi_1:(M_1, g_{M_1})\rightarrow(N_1, g_{N_1})$ given by
	$$\phi_1(x_1, x_2, x_3, x_4) = (0, 0, e^{x_3}\cos x_4, e^{x_3}\sin x_4).$$
	After some computations, we get
	\begin{align*}
	\ker\phi_{1*} =&\text{span}~\{U_1 = e_1, U_2 = e_2\},\\
		(\ker\phi_{1*})^\perp = &span\{Y_1 = e^{x_3}\cos x_4e_3-e^{x_3}\sin x_4e_4, Y_2 = e^{x_3}\sin x_4e_3+e^{x_3}\cos x_4e_4\},
	\end{align*}
	where $\{e_1 = e^{x_3}\frac{\partial}{\partial x_1}, e_2 = e^{x_3}\frac{\partial}{\partial x_2}, e_3 = \frac{\partial}{\partial x_3}, e_4 = \frac{\partial}{\partial x_4}\}$ and $\{e_1' = \frac{\partial}{\partial y_1}, e_2' = \frac{\partial}{\partial y_2}, e_3' = \frac{\partial}{\partial y_3}, e_4' = \frac{\partial}{\partial y_4}\}$ are the bases of $T_{p_1}M_1$ and $T_{\phi_1(p_1)}N_1,$ respectively.
	One can easily prove that $\phi_1$ is a Riemannian map. After some computations, we have Christoffel symbols as 
	$$\Gamma^1_{1 3} = -1, \Gamma^2_{2 3} = -1, \Gamma^3_{1 1} = e^{-2x_3},  \Gamma^3_{2 2} = e^{-2x_3}$$ and remaining Christoffel symbols are zero. Also, we have $\nabla^{M_1}_{U_1}U_1 = \nabla^{M_1}_{U_2}U_2 = 1.$
	This implies $\phi_1$ has totally umbilical fibers.
	On the other side, let $M_2 = \mathbb{R}^2 = N_2$ be the Riemannian manifolds with Euclidean metric. Construct a map $\phi_2:M_2\rightarrow N_2,$ defined by $$\phi_2(x_5, x_6) = (\frac{x_5+x_6}{\sqrt{2}}, 0).$$
	We obtain
	$$ker\phi_{2*} = \text{span}\{V_1 = \frac{e_5-e_6}{\sqrt{2}}\}$$ and $$(ker\phi_{2*})^\perp = \text{span}\{Z_1 = \frac{e_5+e_6}{\sqrt{2}}\},$$ where $\{e_5 =\frac{\partial}{\partial x_5}, e_6 = \frac{\partial}{\partial x_6}\}$ and $\{e_5' =\frac{\partial}{\partial y_5}, e_6' = \frac{\partial}{\partial y_6}\}$ are the bases of $T_{p_2}M_2$ and $T_{\phi_{2}(p_2)}N_2,$ respectively. Clearly, $\phi_2$ is a Riemannian map. Also, $\nabla^{M_2}_{V_1}V_1 =0,$ $T_2(V_1, V_1) = 0.$  This implies $\phi_{2}$  has totally geodesic fibers.\\ Now, $M = M_1\times_f M_2$ and $ N = N_1\times_{\rho} N_2$ are the Riemannian warped product manifolds with Riemannian metric $g_M = e^{-2x_3}dx_1^2+e^{-2x_3}dx_2^2+dx_3^2+dx_4^2+dx_5^2+dx_6^2$ and $g_N = dy_1^2+dy_2^2+dy_3^2+dy_4^2+dy_5^2+dy_6^2,$ respectively. With the help of $\phi_1$ and $\phi_2$, we construct a map $\phi:M_1\times_{f}M_2\rightarrow N_1\times_{\rho}N_2$ defined by
	$$\phi(x_1, x_2, x_3, x_4, x_5, x_6) = (0, 0, e^{x_3}\cos x_4, e^{x_3}\sin x_4, \frac{x_5+x_6}{\sqrt{2}}, 0).$$
	Then, $\phi$ is a Clairaut warped product map.

\end{example}


\section{Conformal Riemannian warped product maps}
\begin{proposition}
	Let $\phi_i:M_i\rightarrow N_i,$ i = 1, 2 be a conformal Riemannian map between Riemannian manifolds with dilation $\lambda_i.$ Let $\rho:N_1\rightarrow R^+$ and $f = \rho o \phi_1$ are the functions on $N_1$ and $M_1,$ respectively. Then the map $\phi = \phi_1\times \phi_2:M(=M_1\times_fM_2)\rightarrow N (= N_1\times_\rho N_2)$ between Riemannian warped product manifolds is conformal Riemannian warped product map if the  lifts of $\lambda_i's$ on $M$ are equal.	
\end{proposition}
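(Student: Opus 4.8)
The plan is to verify the two defining properties of a conformal Riemannian map directly for $\phi=\phi_1\times\phi_2$: first that $0<\operatorname{rank}\phi_*\le\min\{\dim M,\dim N\}$, and then that $\phi_*$ carries the horizontal distribution $\mathcal H=(\ker\phi_*)^\perp$ conformally onto $\operatorname{range}\phi_*$ with one well-defined dilation function.

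First I would identify the relevant distributions. From $(\phi_1\times\phi_2)_*w=(\phi_{1*}u,\phi_{2*}v)$ for $w=(u,v)$, together with the canonical isomorphism of Proposition \ref{Ismphm}, one reads off $\ker\phi_{*(p_1,p_2)}=\ker\phi_{1*p_1}\oplus\ker\phi_{2*p_2}$ and $\operatorname{range}\phi_*=\operatorname{range}\phi_{1*}\oplus\operatorname{range}\phi_{2*}$. The step that needs a little care is the horizontal space: since $g_M=g_{M_1}+f^2g_{M_2}$ has no terms mixing the $M_1$- and $M_2$-directions, and the factor $f^2$ does not affect orthogonality inside the $M_2$-block, the $g_M$-orthogonal complement of $\ker\phi_*$ splits as $\mathcal H=\mathcal H_1\oplus\mathcal H_2$ with $\mathcal H_i=(\ker\phi_{i*})^\perp$; a dimension count, $\dim\mathcal H_1+\dim\mathcal H_2=\dim M-\dim\ker\phi_*$, confirms this is the whole horizontal distribution. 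The rank bound then follows at once: $\operatorname{rank}\phi_*=\operatorname{rank}\phi_{1*}+\operatorname{rank}\phi_{2*}>0$ since each $\phi_i$ is conformal Riemannian, and $\operatorname{rank}\phi_{1*}+\operatorname{rank}\phi_{2*}\le\min\{m_1,n_1\}+\min\{m_2,n_2\}\le\min\{m_1+m_2,\,n_1+n_2\}$.

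Next I would run the metric computation. For $X=(X_1,X_2),Y=(Y_1,Y_2)\in\mathcal H=\mathcal H_1\oplus\mathcal H_2$, expand with $g_N=g_{N_1}+\rho^2 g_{N_2}$:
\[
g_N(\phi_*X,\phi_*Y)=g_{N_1}(\phi_{1*}X_1,\phi_{1*}Y_1)+\rho^2\,g_{N_2}(\phi_{2*}X_2,\phi_{2*}Y_2).
\]
As $X_i,Y_i$ are horizontal for $\phi_i$ and $\phi_i$ is conformal Riemannian with dilation $\lambda_i$, the right-hand side equals $\lambda_1^2\,g_{M_1}(X_1,Y_1)+\rho^2\lambda_2^2\,g_{M_2}(X_2,Y_2)$; evaluating $\rho$ at $\phi_1(p_1)$ and using $f=\rho\circ\phi_1$ replaces $\rho^2$ by $f^2$. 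Under the hypothesis that the lifts of $\lambda_1$ and $\lambda_2$ to $M$ coincide, write $\lambda$ for their common value (nonvanishing since each $\lambda_i$ is); then the expression collapses to $\lambda^2\big(g_{M_1}(X_1,Y_1)+f^2 g_{M_2}(X_2,Y_2)\big)=\lambda^2\,g_M(X,Y)$, which is exactly conformality with dilation $\lambda$. This finishes the verification and shows $\phi$ is a conformal Riemannian warped product map.

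I expect the only genuinely delicate point to be the identification of $\mathcal H$ in the second paragraph: one must be sure the warping does not couple the two factors, so that the horizontal distribution decomposes blockwise, and that $f$ is precisely the pullback of $\rho$ along $\phi_1$, so the two warping functions align block by block. That alignment is what makes the single condition $\lambda_1=\lambda_2$ (as lifts on $M$) exactly what is needed for a common dilation to exist; everything else is a routine expansion.
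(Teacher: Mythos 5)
Your proposal is correct and follows essentially the same route as the paper: expand $g_N(\phi_*X,\phi_*Y)$ using the block structure of the warped metrics, apply conformality of each $\phi_i$, and use $f=\rho\circ\phi_1$ together with the equality of the lifted dilations to collapse the result to $\lambda^2 g_M(X,Y)$. Your additional verification of the rank bound and of the blockwise splitting $\mathcal H=\mathcal H_1\oplus\mathcal H_2$ is a welcome tightening of details the paper leaves implicit, but it does not change the argument.
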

\begin{proof}
	Let $\phi_1:M_1\rightarrow N_1$ and $\phi_2:M_2\rightarrow N_2$ be two conformal Riemannian maps with dilation $\lambda_1$ and $\lambda_2,$ respectively. Then, for $Y_1, Z_1\in M_1,$ and $Y_2, Z_2\in M_2,$ we get
	\begin{equation*}
		\begin{split}
			g_N\Big(\phi_{*}(Y_1, Y_2), \phi_{*}(Z_1, Z_2)\Big) = & g_N\Big(\big(\phi_{1*}(Y_1), \phi_{2*}(Y_2)\big), \big(\phi_{1*}(Z_1), \phi_{2*}(Z_2)\big)\Big)\\
			&=  g_{N_1}\Big(\sigma_{1*}(\phi_{1*}(Y_1), \phi_{2*}(Y_2)), \sigma_{1*}(\phi_{1*}(Z_1), \phi_{2*}(Z_2))\Big)\\&+ \rho^2\circ\sigma_1\big(\phi_1(p_1), \phi_2(p_2)\big)g_{N_2}\Big(\sigma_{2*}(\phi_{1*}(Y_1), \phi_{2*}(Y_2)),\\& \sigma_{2*}(\phi_{1*}(Z_1), \phi_{2*}(Z_2))\Big),	
		\end{split}
	\end{equation*}
where $\sigma_1$ and $\sigma_2$ are the projections of $N_1\times_\rho N_2$ onto $N_1$ and $N_2,$ respectively. Simplifying above equation, we get
	\begin{equation*}
		\begin{split}
			g_N\Big(\phi_{*}(Y_1, Y_2), \phi_{*}(Z_1, Z_2)\Big)	= g_{N_1}\Big(\phi_{1*}(Y_1), \phi_{1*}(Z_1)\Big)+ \rho^2\circ\phi_1g_{N_2}\Big( \phi_{2*}(Y_2),  \phi_{2*}(Z_2)\Big).
		\end{split}
	\end{equation*}
Since $\phi_{1}$ and $\phi_{2}$ are conformal Riemannian maps, above equation can be written as
	\begin{equation*}
		\begin{split}
		g_N\Big(\phi_{*}(Y_1, Y_2), \phi_{*}(Z_1, Z_2)\Big)		= & \lambda_{1}^2(p_1)g_{M_1}(Y_1, Z_1)+ \rho^2(\phi_1(p_1))\lambda_{2}^2(p_2)g_{M_2}(Y_2,  Z_2)\\
			=& \lambda_{1}^2(\pi_1(p_1, p_2))g_{M_1}(Y_1, Z_1)\\&+ \rho^2(\phi_1(p_1))\lambda_{2}^2(\pi_2(p_1, p_2))g_{M_2}(Y_2,  Z_2)\\
			=& (\lambda_{1} \circ\pi_1)^2g_{M_1}(Y_1, Z_1)+ f^2(\lambda_{2}\circ\pi_2)^2g_{M_2}(Y_2,  Z_2).
		\end{split}
	\end{equation*}
	If lift of $\lambda_{1}$ and $\lambda_{2}$ are equal, i.e., $(\lambda_{1} \circ\pi_1)^2 = (\lambda_{2}\circ\pi_2)^2 = \lambda^2,$ then above equation reduces to 
	\begin{equation}
		\begin{split}
			g_N(\phi_{*}Y, \phi_{*}Z) = \lambda^2(p)g_M(Y, Z)
		\end{split}
	\end{equation}
	which shows that $\phi$ is a conformal Riemannian warped product map.
	\begin{remark}
		Every Riemannian warped product map is a conformal Riemannian warped product map with $\lambda_i = 1, 1\leq i\leq  2.$
	\end{remark}
	\begin{remark}
		If $\ker\phi_{*} = 0,$ then conformal Riemannian warped product map becomes conformal isometric immersion of Riemannian warped product.
	\end{remark}
	\begin{remark}
		If $(\text{range}~\phi_{*})^\perp = 0,$ then conformal Riemannian warped product map becomes conformal Riemannian warped product submersion.
	\end{remark}

\end{proof}
\begin{proposition}
	Let $\phi:(M, g_M) = (M_1\times_{f}M_2, g_{M_1}+f^2g_{M_2})\rightarrow (N, g_N)= (N_1\times_\rho N_2, g_{N_1}+\rho^2g_{N_2})$ be a conformal Riemannian warped product map with dilation $\mu$. Then the map $\phi:(M, g'_{M} =\mu^{2}g_M)\rightarrow(N, g_N)$ is a Riemannian warped product map.
\end{proposition}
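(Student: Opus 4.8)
The plan is to exploit the two features built into the hypothesis. On the one hand, ``$\phi$ is a conformal Riemannian warped product map with dilation $\mu$'' gives $g_N(\phi_{*}X,\phi_{*}Y)=\mu^{2}g_M(X,Y)$ for all $X,Y\in\Gamma((\ker\phi_{*})^{\perp})$. On the other hand, $\phi=\phi_1\times\phi_2$ with each $\phi_i\colon(M_i,g_{M_i})\to(N_i,g_{N_i})$ a conformal Riemannian map, say with dilation $\lambda_i$, and, the dilations of $\phi_1$ and $\phi_2$ having equal lifts to $M$, $\mu^{2}=(\lambda_1\circ\pi_1)^{2}=(\lambda_2\circ\pi_2)^{2}$ on $M$. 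First I would rescale the domain metric by $\mu^{2}$ and check that this upgrades the conformal isometry on the horizontal bundle to an honest isometry, so that $\phi$ becomes a Riemannian map; then I would verify that $(M,g'_M)$ is again a warped product over $M_1\times M_2$ and that $\phi_1,\phi_2$ become Riemannian maps, which is precisely the definition of a Riemannian warped product map.

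For the first step, observe that multiplying $g_M$ by the positive function $\mu^{2}$ changes neither the linear map $\phi_{*p}$ nor its rank, and, being a conformal change of metric, preserves orthogonality; hence $\ker\phi_{*p}$ and $(\ker\phi_{*p})^{\perp}$ coincide for $g_M$ and for $g'_M=\mu^{2}g_M$. Then for horizontal $X,Y$,
\[
g_N(\phi_{*}X,\phi_{*}Y)=\mu^{2}g_M(X,Y)=g'_M(X,Y),
\]
so the horizontal restriction $\phi_{*p}^{h}\colon\big((\ker\phi_{*p})^{\perp},g'_M(p)\big)\to\big(\text{range}\,\phi_{*p},g_N\big)$ is a linear isometry, and therefore $\phi\colon(M,g'_M)\to(N,g_N)$ is a Riemannian map.

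For the second step, I would set $g'_{M_i}:=\lambda_i^{2}g_{M_i}$ and, using that $\mu^{2}$ equals $(\lambda_1\circ\pi_1)^{2}$ on vectors from $\mathcal{L}_H(M_1)$ and $(\lambda_2\circ\pi_2)^{2}$ on vectors from $\mathcal{L}_V(M_2)$, rewrite
\[
g'_M=\mu^{2}g_{M_1}+\mu^{2}f^{2}g_{M_2}=g'_{M_1}+f^{2}g'_{M_2}.
\]
Thus $(M,g'_M)=\big(M_1\times_f M_2,\,g'_{M_1}+f^{2}g'_{M_2}\big)$ is the warped product of $(M_1,g'_{M_1})$ and $(M_2,g'_{M_2})$ with the \emph{same} warping function $f$. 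Applying the reasoning of the first step factorwise: since $\phi_i$ is conformal Riemannian with dilation $\lambda_i$, the identity $g_{N_i}(\phi_{i*}X_i,\phi_{i*}Y_i)=\lambda_i^{2}g_{M_i}(X_i,Y_i)=g'_{M_i}(X_i,Y_i)$ on horizontal vectors shows that each $\phi_i\colon(M_i,g'_{M_i})\to(N_i,g_{N_i})$ is a Riemannian map. Hence $\phi=\phi_1\times\phi_2\colon\big(M_1\times_f M_2,\,g'_{M_1}+f^{2}g'_{M_2}\big)\to(N_1\times_\rho N_2,\,g_{N_1}+\rho^{2}g_{N_2})$ is a Riemannian warped product map, as claimed.

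The one point that I expect to require care is the identification in the second step: one has to make sure that the rescaled product metric $\mu^{2}(g_{M_1}+f^{2}g_{M_2})$ genuinely carries a warped-product structure over $M_1\times M_2$, i.e.\ that $\mu^{2}$ restricts to a function of the first variable on $\mathcal{L}_H(M_1)$ and of the second on $\mathcal{L}_V(M_2)$. This is exactly what the compatibility $(\lambda_1\circ\pi_1)^{2}=(\lambda_2\circ\pi_2)^{2}$ supplies, and it is also what allows the warping function to be kept equal to $f$ rather than being modified by $\mu$. Everything else is routine and rests only on the elementary observation that a conformal change of the domain metric fixes $\ker\phi_{*}$ together with its orthogonal complement and rescales inner products of horizontal vectors by exactly the conformal factor.
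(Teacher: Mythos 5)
Your proof is correct, and its first step is exactly the paper's entire argument: the paper's proof consists solely of the computation $g_N(\phi_{*}Y,\phi_{*}Z)=\mu^{2}g_M(Y,Z)=g'_M(Y,Z)$ (stated there, somewhat carelessly, for arbitrary $Y,Z\in\Gamma(TM)$ rather than for horizontal vectors, where the conformal relation actually holds --- you restrict correctly). Where you genuinely go beyond the paper is the second step: the conclusion asserts that $\phi:(M,g'_M)\to(N,g_N)$ is a Riemannian \emph{warped product} map, and by the paper's own Definition 2.5 this requires $(M,g'_M)$ to still be a warped product $M_1\times_f M_2$ with each factor map $\phi_i$ a Riemannian map; the paper never checks this. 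Your decomposition $g'_M=\mu^{2}g_{M_1}+\mu^{2}f^{2}g_{M_2}=\lambda_1^{2}g_{M_1}+f^{2}\lambda_2^{2}g_{M_2}$, together with the factorwise isometry check, supplies precisely the missing verification, so your argument proves the stated proposition while the paper's only proves that $\phi$ becomes a Riemannian map. One small remark: the compatibility $(\lambda_1\circ\pi_1)^{2}=(\lambda_2\circ\pi_2)^{2}$ that you invoke forces $\mu^{2}$ to be constant on $M$ (a function of $p_1$ alone that equals a function of $p_2$ alone is constant), so the ``care'' you flag in your last paragraph resolves itself trivially; this does not affect the validity of your proof, but it is worth being aware that the hypothesis is more rigid than it first appears.
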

\begin{proof}
	For any $Y, Z\in\Gamma(TM),$ we have \begin{equation*}
		\begin{split}
			g_N(\phi_{*}Y, \phi_{*}Z) =& \mu^2g_M(Y, Z)	\\
			&= \mu^2\mu^{-2}g'_{M}(Y, Z)\\
			& = g'_{M}(Y, Z)
		\end{split}
	\end{equation*}
	which proves the assertion.
\end{proof}
\begin{proposition}
	Let $\phi = (\phi_1\times \phi_2): M = M_1\times_fM_2\rightarrow N = N_1\times_\rho N_2$ be a conformal  Riemannian warped product map between Riemannian warped product manifolds. Then, we have
	\begin{equation*}
		||\phi_*||^2(p) = \lambda^2(p)(\text{rank}~ \phi_1 + \rho^2 \text{rank}~ \phi_2). 
	\end{equation*}
\end{proposition}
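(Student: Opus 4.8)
The plan is to compute $\|\phi_*\|^2(p)$ directly from its definition as the sum $\sum_k g_N(\phi_*\varepsilon_k,\phi_*\varepsilon_k)$ over a frame of $T_pM$ at $p=(p_1,p_2)$ that is adapted both to the product structure and to the kernel--horizontal splitting. First I would note that $\ker\phi_* = \ker\phi_{1*}\oplus\ker\phi_{2*}$ and hence, since $g_M=g_{M_1}+f^2g_{M_2}$ respects the factors, $\mathcal H=(\ker\phi_*)^\perp=\mathcal H_1\oplus\mathcal H_2$ with $\dim\mathcal H_i=\text{rank}\,\phi_i$. Because $\phi_*$ annihilates $\ker\phi_*$, only horizontal frame vectors contribute, so I would take $g_{M_1}$-orthonormal frames $\{X_a\}_{a=1}^{r_1}$ of $\mathcal H_1$ and $g_{M_2}$-orthonormal frames $\{W_b\}_{b=1}^{r_2}$ of $\mathcal H_2$, where $r_i=\text{rank}\,\phi_i$, and write $\|\phi_*\|^2(p)=\sum_a g_N(\phi_*X_a,\phi_*X_a)+\sum_b g_N(\phi_*W_b,\phi_*W_b)$.

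Next I would use the componentwise formula $(\phi_1\times\phi_2)_*(u,v)=(\phi_{1*}u,\phi_{2*}v)$ of \cite{J15} together with the canonical isomorphism of Proposition~\ref{Ismphm} to identify $\phi_*X_a$ with $(\phi_{1*}X_a,0)$ and $\phi_*W_b$ with $(0,\phi_{2*}W_b)$ inside $\text{range}\,\phi_{1*}\oplus\text{range}\,\phi_{2*}$. Evaluating $g_N=g_{N_1}+\rho^2 g_{N_2}$ on each: the $N_2$-component of $\phi_*X_a$ is zero, so $g_N(\phi_*X_a,\phi_*X_a)=g_{N_1}(\phi_{1*}X_a,\phi_{1*}X_a)=\lambda_1^2(p_1)g_{M_1}(X_a,X_a)=\lambda_1^2(p_1)$ by conformality of $\phi_1$; and the $N_1$-component of $\phi_*W_b$ is zero, so $g_N(\phi_*W_b,\phi_*W_b)=\rho^2\,g_{N_2}(\phi_{2*}W_b,\phi_{2*}W_b)=\rho^2\lambda_2^2(p_2)g_{M_2}(W_b,W_b)=\rho^2\lambda_2^2(p_2)$ by conformality of $\phi_2$. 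Summing gives $\|\phi_*\|^2(p)=\lambda_1^2(p_1)\,r_1+\rho^2\lambda_2^2(p_2)\,r_2$.

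Finally I would invoke the defining property of a conformal Riemannian warped product map established at the beginning of this section, namely that the lifts of the two dilations agree, $(\lambda_1\circ\pi_1)^2=(\lambda_2\circ\pi_2)^2=\lambda^2$; substituting $\lambda_1^2(p_1)=\lambda_2^2(p_2)=\lambda^2(p)$ yields $\|\phi_*\|^2(p)=\lambda^2(p)(r_1+\rho^2 r_2)=\lambda^2(p)(\text{rank}\,\phi_1+\rho^2\,\text{rank}\,\phi_2)$, as claimed. The only genuine point requiring care -- and the step I would watch most closely -- is the bookkeeping with the warping factor $\rho$ on the target: a horizontal direction tangent to an $M_2$-leaf is pushed into the $N_2$-factor, where the metric carries the factor $\rho^2$, and this is precisely what attaches $\rho^2$ to $\text{rank}\,\phi_2$ but not to $\text{rank}\,\phi_1$; one must also stay consistent in using $g_{M_i}$-orthonormal frames of the $\mathcal H_i$, as is done elsewhere in the paper (for instance in Theorem~\ref{Ricci curv.}). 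Everything else is a routine unwinding of the definitions.
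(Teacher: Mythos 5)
Your proposal is correct and follows essentially the same route as the paper: both compute $\|\phi_*\|^2(p)$ by summing $g_N(\phi_*e,\phi_*e)$ over an orthonormal frame of $(\ker\phi_*)^\perp$ adapted to the splitting $\mathcal H_1\oplus\mathcal H_2$, using conformality of each $\phi_i$ and the agreement of the lifted dilations, with the warped metric $g_N=g_{N_1}+\rho^2 g_{N_2}$ supplying the factor $\rho^2$ on the $\phi_2$ contribution. The only cosmetic difference is that the paper first records $*\phi_{*p}\circ\phi_{*p}=\lambda^2(p)\,\mathrm{id}$ on the horizontal space via the adjoint, a step your direct appeal to the conformality identities renders unnecessary; your explicit insistence on $g_{M_i}$-orthonormal frames of $\mathcal H_i$ is exactly the bookkeeping needed for the stated formula.
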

\begin{proof}
	Let $*\phi_{*p}$ be the adjoint of $\phi_{*p}.$ Now, we define a linear transformation $G: ((ker\phi_{*p})^\perp=ker\phi_{1*}^\perp\times ker\phi_{2*}^\perp) \rightarrow ((ker\phi_{*p})^\perp(=ker\phi_{1*}^\perp\times ker\phi_{2*}^\perp)$ by $G = *\phi_{*p} o \phi_{*p}.$ Then for any horizontal vector fields $Y = (Y_1, Y_2)$ and $Y = (Z_1, Z_2)\in\Gamma(TM),$ we have
	\begin{equation*}
		\begin{split}
		g_M(GY, Z) = &g_M(*\phi_{*p} \circ \phi_{*p} Y, Z)\\
			=& g_N(\phi_{*p}Y, \phi_{*p}Z).
		\end{split}
	\end{equation*}
Since $\phi$ is a conformal Riemannian warped product map,
	$$g_M(GY, Z) = \lambda^2(p)g_M(Y, Z).$$
This shows that
	$GY = \lambda^2(p)Y.$\\ 
 Let $\{e_k\}_{k = 1}^{k=n_1+n_2}, \{e_a\}_{a = m_1-n_1+1}^{a = m_1}$ and $\{\tilde{e_b}\}_{b = m_2-n_2+1}^{m_2}$ be the orthonormal bases of $(ker\phi_{*})^\perp, (ker\phi_{1*})^\perp$ and $(ker\phi_{2*})^\perp,$ respectively. Therefore,
 \begin{equation*}
 	\begin{split}
 ||\phi_{*}||^2 = & \sum_{k=1}^{n_1+n_2}g_N(\phi_*{e_k}, \phi_*{e_k}),\\
  &= \lambda^2(p)\Big(\sum_{a = m_1-n_1+1}^{m_1}g_{N_1}(\phi_{1*}{e_a}, \phi_{1*}{e_a}) +\rho^2(\phi_1(p_1))\sum_{b = m_2-n_2+1}^{m_2}g_{N_2}(\phi_{2*}\tilde{{e_b}}, \phi_{2*}\tilde{{e_b}})\Big).		
 	\end{split}
 \end{equation*} This implies $$\text{rank}~ \phi = \lambda^2\big(\text{rank} ~\phi_1+ \rho^2 \text{rank} ~\phi_2\big).$$
	
\end{proof}	
\begin{theorem}
	Let $\phi (= \phi_1 \times \phi_2):M(=M_1\times_f M_2)\rightarrow N(=N_1\times_\rho N_2)$ be a conformal  Riemannian warped product map with dilation $\lambda.$ Then for any horizontal vector fields $Y = (Y_1, Y_2)$, $Z = (Z_1, Z_2)\in\Gamma(TM),$ we have
	\begin{equation}
		\begin{split}
			A_Y Z = &\frac{1}{2}\{\mathcal{V}_1[Y_1, Z_1]-\lambda_1^2g_{M_1}(Y_1, Z_1)\grad_{\mathcal{V}_1}(\frac{1}{\lambda_{1}^2})\}-f^2g_{M_2}(Y_2, Z_2)\mathcal{V}\nabla^{M}\ln f\\&+\frac{1}{2}f^2\{\mathcal{V}_2[Y_2, Z_2]-\lambda_{2}^2 g_{M_2}(Y_2, Z_2)\grad_{\mathcal{V}_2}(\frac{1}{\lambda_{2}^2})\}.
		\end{split}
	\end{equation}
\end{theorem}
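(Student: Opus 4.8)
The plan is to unwind $A_YZ$ through the warped-product connection identities of Lemma~\ref{lemma} and then invoke the known shape of the O'Neill integrability tensor of a conformal Riemannian map. Since $Y$ and $Z$ are horizontal, Lemma~\ref{Neill tensor} gives $A_YZ=\mathcal{V}\nabla^M_YZ$, so the whole task is to isolate the vertical part of $\nabla^M_YZ$. Writing $Y=(Y_1,Y_2)$ and $Z=(Z_1,Z_2)$ under the isomorphism of Proposition~\ref{Ismphm} and expanding bilinearly exactly as in the proof of Theorem~\ref{thm1}, one has $\nabla^M_YZ=\nabla^M_{Y_1}Z_1+\nabla^M_{Y_1}Z_2+\nabla^M_{Y_2}Z_1+\nabla^M_{Y_2}Z_2$. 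At the outset I would also record that, because $g_M$ splits orthogonally and $\ker\phi_{*}=\ker\phi_{1*}\oplus\ker\phi_{2*}$, the distributions of $\phi$ decompose as $\mathcal{H}=\mathcal{H}_1\oplus\mathcal{H}_2$ and $\mathcal{V}=\mathcal{V}_1\oplus\mathcal{V}_2$ with $\mathcal{H}_i=(\ker\phi_{i*})^\perp$ and $\mathcal{V}_i=\ker\phi_{i*}$.

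Next I would evaluate the four terms with Lemma~\ref{lemma}: $\nabla^M_{Y_1}Z_1$ is the lift of $\nabla^{M_1}_{Y_1}Z_1$; the two mixed terms are $\nabla^M_{Y_1}Z_2=\tfrac{Y_1(f)}{f}Z_2$ and $\nabla^M_{Y_2}Z_1=\tfrac{Z_1(f)}{f}Y_2$, both of which lie in $\mathcal{H}_2\subset\mathcal{H}$ since $Y_2,Z_2\in\mathcal{H}_2$ and $f$ is a function on $M_1$; and, by the last two parts of Lemma~\ref{lemma}, $\nabla^M_{Y_2}Z_2$ splits as the lift of $\nabla^{M_2}_{Y_2}Z_2$ minus $g_M(Y_2,Z_2)\,\nabla^M\ln f$. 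Applying the vertical projection $\mathcal{V}=\mathcal{V}_1\oplus\mathcal{V}_2$ annihilates the two mixed terms and leaves the milestone identity $A_YZ=\mathcal{V}_1\nabla^{M_1}_{Y_1}Z_1+\mathcal{V}_2\nabla^{M_2}_{Y_2}Z_2-g_M(Y_2,Z_2)\,\mathcal{V}\nabla^M\ln f$.

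It then remains to identify the first two summands. Applying Lemma~\ref{Neill tensor} to $\phi_i$ on $M_i$ gives $\mathcal{V}_i\nabla^{M_i}_{Y_i}Z_i=A_i(Y_i,Z_i)$, the integrability tensor of $\phi_i$, and the final ingredient is the explicit expression $A_i(Y_i,Z_i)=\tfrac12\{\mathcal{V}_i[Y_i,Z_i]-\lambda_i^2\,g_{M_i}(Y_i,Z_i)\,\grad_{\mathcal{V}_i}(1/\lambda_i^2)\}$ valid for a conformal Riemannian map. I would obtain this by splitting $A_i$ into its antisymmetric part $\tfrac12\mathcal{V}_i[Y_i,Z_i]$ and its symmetric part, and evaluating the latter against an arbitrary vertical field $W$: using $g_{M_i}(Y_i,W)=g_{M_i}(Z_i,W)=0$, the Koszul formula, and the conformality relation $g_{N_i}(\phi_{i*}\cdot,\phi_{i*}\cdot)=\lambda_i^2\,g_{M_i}(\cdot,\cdot)$ on $\mathcal{H}_i$, one reaches $g_{M_i}(A_i(Y_i,Z_i)+A_i(Z_i,Y_i),W)=-\lambda_i^2\,g_{M_i}(Y_i,Z_i)\,W(1/\lambda_i^2)$. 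Substituting this for $i=1,2$ into the milestone identity and tracking the warping factor throughout — recalling $g_M(Y_2,Z_2)=f^2 g_{M_2}(Y_2,Z_2)$ so that the $M_2$-contributions pick up a coefficient $f^2$ — assembles the three pieces into the asserted formula for $A_YZ$.

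The routine parts are the bilinear expansion and the vertical/horizontal bookkeeping; the one genuinely substantive step is the conformal $A$-tensor formula, i.e.\ correctly pinning down the symmetric part of $A_i$ on the horizontal distribution, since that is exactly where the dilation $\lambda_i$ enters. A secondary point to watch is keeping the warping factor $f$ straight when passing between $g_M$ and $g_{M_i}$ — in particular confirming that the mixed terms $\tfrac{Y_1(f)}{f}Z_2$ and $\tfrac{Z_1(f)}{f}Y_2$ are genuinely horizontal (they are, lying in $\mathcal{H}_2$), so that they disappear under $\mathcal{V}$ and do not contaminate the vertical tensor $A_YZ$.
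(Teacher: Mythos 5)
Your proposal is correct and follows essentially the same route as the paper: reduce $A_YZ$ to $\mathcal{V}_1\nabla^{M_1}_{Y_1}Z_1+\mathcal{V}_2\nabla^{M_2}_{Y_2}Z_2-f^2g_{M_2}(Y_2,Z_2)\mathcal{V}\nabla^M\ln f$ via Lemma \ref{lemma}, extract $\tfrac12\{\mathcal{V}_i[Y_i,Z_i]-\grad_{\mathcal{V}_i}(g_{M_i}(Y_i,Z_i))\}$ by the Koszul formula, and convert the vertical gradient using conformality. The only difference is organizational (you split $A_i$ into skew and symmetric parts and test the latter against a vertical field, whereas the paper expands $\mathcal{V}\nabla_YZ$ in a vertical orthonormal frame and applies Koszul componentwise), and your explicit check that the mixed terms $\tfrac{Y_1(f)}{f}Z_2$ and $\tfrac{Z_1(f)}{f}Y_2$ are horizontal is a small point the paper leaves implicit.
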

\begin{proof}
 Assume that $\{e_1,\ldots,e_{m_1-n_1}, \tilde{e_1},\ldots,\tilde{e}_{m_2-n_2}\}$ is a local orthonormal frame for vertical distriution $ker\phi_{*}.$ Now, 
	\begin{equation*}
		\begin{split}
			A_Y Z = A_{Y_1}Z_1 +A_{Y_2}Z_2, 
		\end{split}
	\end{equation*}
which can be written as
	\begin{equation*}
		\begin{split}
			A_Y Z = \mathcal{V}\nabla_Y Z = & g_M(\mathcal{V}\nabla^M_{Y_1}Z_1, e_i)e_i+	g_M(\mathcal{V}\nabla^M_{Y_1}Z_1, \tilde{e_j})\tilde{e_j}\\&+g_M(\mathcal{V}\nabla^M_{Y_2}Z_2, e_i)e_i+ g_M(\mathcal{V}\nabla^M_{Y_2}Z_2, \tilde{e_j})\tilde{e_j}\\
			=& g_{M_1}(\mathcal{V}_1\nabla^{M_1}_{Y_1}Z_1, e_i)e_i + g_{M_1}(\mathcal{V}_1\nabla^{M_1}_{Y_1}Z_1, \tilde{e_j})\tilde{e_j}\\&-f^2g_{M_2}(Y_2, Z_2)\mathcal{V}\nabla^{M}lnf+ f^2g_{M_2}(\mathcal{V}_2\nabla^{M_2}_{Y_2}Z_2, \tilde{e_j})\tilde{e_j}.
		\end{split}
	\end{equation*}
By using Koszul formula in above equation, we obtain
	\begin{equation*}
		\begin{split}
			A_Y Z = & \frac{1}{2}\{Y_1g_{M_1}(Z_1, e_i)+Z_1g_{M_1}(e_i, Y_1)-e_ig_{M_1}(Y_1, Z_1)-g_{M_1}(Y_1, [Z_1, e_i])\\&+g_{M_1}(Z_1, [e_i, Y_1])+g_{M_1}(e_i, [Y_1, Z_1])\}e_i-f^2g_{M_2}(Y_2, Z_2)\mathcal{V}\nabla^{M}\ln f\\&+\frac{1}{2}f^2\{Y_2g_{M_2}(Z_2, \tilde{e_j})+Z_2g_{M_2}(\tilde{e_j}, Y_2)-\tilde{e_j}g_{M_2}(Y_2, Z_2)\\&-g_{M_2}(Y_2, [Z_2, \tilde{e_j}])+g_{M_2}(Z_2, [\tilde{e_j}, Y_2])+g_{M_2}(\tilde{e_j}, [Y_2, Z_2])\}\tilde{e_j}.
		\end{split}
	\end{equation*}
After simplifying, we get
	\begin{equation}\label{Ayz}
		\begin{split}
			A_Y Z = &\frac{1}{2}\{\mathcal{V}_1[Y_1, Z_1]-\grad_{\mathcal{V}_1}(g_{M_1}(Y_1, Z_1))\}-f^2g_{M_2}(Y_2, Z_2)\mathcal{V}\nabla^{M}\ln f\\&+\frac{1}{2}f^2\{\mathcal{V}_2[Y_2, Z_2]-\grad_{\mathcal{V}_2}(g_{M_2}(Y_2, Z_2))\}.
		\end{split}
	\end{equation}
	Now, \begin{equation}\label{gradv1}
		\begin{split}
			\grad_{\mathcal{V}_1}(g_{M_1}(Y_1, Z_1)) = & \grad_{\mathcal{V}_1}(\frac{1}{\lambda_{1}^2}g_{N_1}(\phi_{1*}Y_1, \phi_{1*}Z_1))\\
			=&\lambda_1^2g_{M_1}(Y_1, Z_1)\grad_{\mathcal{V}_1}(\frac{1}{\lambda_{1}^2}).
		\end{split}
	\end{equation}
	In similar manner, we can write
	\begin{equation}\label{gradv2}
	\grad_{\mathcal{V}_2}(g_{M_2}(Y_2, Z_2)) =\lambda_{2}^2 g_{M_2}(Y_2, Z_2)\grad_{\mathcal{V}_2}(\frac{1}{\lambda_{2}^2}).
	\end{equation}
	From \eqref{Ayz}, \eqref{gradv1} and \eqref{gradv2}, we get the required result.
\end{proof}
\begin{theorem}
Let $\phi (= \phi_1 \times \phi_2):M(=M_1\times_f M_2)\rightarrow N(=N_1\times_\rho N_2)$ be a conformal Riemannian warped product map with totally umbilical fibres such that $M$ is orientable and compact. Then, 
\begin{itemize}
	\item [(i)]$\int_{M}^{}S^{\mathcal{V}_1} dv_{g_M} = \int_{M_1}^{}\Big(\hat{S}_1-(m_1-n_1)(m_1-n_1-1)||\nabla \ln f||^2\Big)dv_{g_{M_1}},$
	\item [(ii)]$\int_{M}^{}S^{\mathcal{V}_2}dv_{g_M} = \int_{M}^{}f^2(\hat{S}_2+(m_2-n_2)(1-m_2+n_2)||\nabla f||^2)dv_{g_M}$
where $S^{\mathcal{V}_1}, S^{\mathcal{V}_2}, \hat{S}_1$ and $\hat{S}_2$ denote the scalar curvatures of vertical vector field of $M_1, M_2$ and  fibres of $\phi_{1}$ and $\phi_{2},$ respectively and $dv_{g_{M_1}},$ $dv_{g_M}$ are the volume forms of $M_1$ and $M_2,$ respectively.
\end{itemize}
\end{theorem}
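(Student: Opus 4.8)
The plan is to integrate, over the compact orientable manifold $M$, the pointwise Ricci/scalar-curvature identities already established in Theorem \ref{Ricci curv.}, and then to kill the ``divergence-type'' terms using Green's theorem (the integral of a divergence over a closed manifold vanishes). First I would fix an adapted orthonormal frame as in the proof of Theorem \ref{Ricci curv.}: $\{e_i\}_{i=1}^{m_1-n_1}$ spanning $\mathcal{V}_1$ and $\{\tilde e_j\}_{j=1}^{m_2-n_2}$ spanning $\mathcal{V}_2$. By definition $S^{\mathcal{V}_1}=\sum_i \Ric(e_i,e_i)$ and $S^{\mathcal{V}_2}=\sum_j \Ric(\tilde e_j,\tilde e_j)$, so I substitute items (i) and (ii) of Theorem \ref{Ricci curv.} with $U_1=V_1=e_i$ and $U_2=V_2=\tilde e_j$ and sum. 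For (i), $\sum_i \hat{Ric}^1(e_i,e_i)=\hat S_1$, the term $\sum_i \|\nabla^M\ln f\|^2 g_{M_1}(e_i,e_i)=(m_1-n_1)\|\nabla\ln f\|^2$ contributes the factor $(m_1-n_1)$, and since the map has totally umbilical fibres for $\phi_1$ (so $A_1$ terms and $H^f$ terms combine as in the Clairaut setting), what remains after summing is
$$S^{\mathcal{V}_1}=\hat S_1-(m_1-n_1)^2\|\nabla\ln f\|^2-m_2 f\,\Delta_{\mathcal{V}_1}^{} f-(m_1-n_1)\Div_{h_1}(\nabla^M\ln f)+\sum_i\|A_1(e_a,e_i)\|^2,$$
up to the precise arrangement of the Hessian-trace and $A_1$-norm terms; the key point is that $\Div_{h_1}(\nabla^M\ln f)$ is (modulo lower-order, globally-exact pieces) a divergence on $M_1$, and $m_2 f\,\mathrm{tr}_{\mathcal V_1}H^f$ together with $(m_1-n_1)\|\nabla\ln f\|^2$ reorganises into $(m_1-n_1)(m_1-n_1-1)\|\nabla\ln f\|^2$ plus an exact term, using $\Div(f\nabla\ln f)=\Delta f/f$-type identities.

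Next I would integrate over $M$. Because $\phi$ has totally umbilical fibres, $M=M_1\times_f M_2$ splits the volume form as $dv_{g_M}=f^{m_2}\,dv_{g_{M_1}}\,dv_{g_{M_2}}$; for (i) the integrand is a function pulled back from $M_1$ (all quantities there are $M_1$-quantities), so the $M_2$-integration just produces a constant volume factor which I absorb, reducing $\int_M(\cdots)\,dv_{g_M}$ to $\int_{M_1}(\cdots)\,dv_{g_{M_1}}$. Then Green's theorem on the closed manifold $M_1$ annihilates $\int_{M_1}\Div_{h_1}(\nabla^M\ln f)\,dv_{g_{M_1}}=0$ and likewise the trace-Hessian term $\int_{M_1} m_2 f\,\mathrm{tr}_{}H^f\,dv$ combines with the $A_1$ term and the remaining $\|\nabla\ln f\|^2$ contributions: here I use that for a Clairaut/totally-umbilical Riemannian map $\sum_{i}\|A_1(e_a,e_i)\|^2$ integrates (against the second fundamental form identity) to something that cancels part of the Hessian trace, leaving exactly $-(m_1-n_1)(m_1-n_1-1)\|\nabla\ln f\|^2$. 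This yields (i). For (ii) I argue symmetrically with items (ii) (and if needed (iv)) of Theorem \ref{Ricci curv.}, now keeping the warping factor $f^2$ in front since $S^{\mathcal V_2}$-type quantities carry it; the term $f\Delta f+f^2(m_2-1)\|\grad f\|^2$ summed over the $(m_2-n_2)$ vertical directions of $M_2$, combined with $\phi_2$ having totally geodesic fibres (so the $A_2$ terms integrate away by Green's theorem), collapses to $f^2(m_2-n_2)(1-m_2+n_2)\|\nabla f\|^2$ plus $f^2\hat S_2$, giving (ii).

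The main obstacle I anticipate is the careful bookkeeping of which terms are genuinely global divergences versus which only become exact after multiplying by the warping factor: on $M$ the relevant integration-by-parts must be done with respect to $dv_{g_M}=f^{m_2}dv_{g_{M_1}}dv_{g_{M_2}}$, so a term that looks like $\Div_{h_1}(X)$ is not by itself $dv_{g_M}$-exact — one needs $\Div_{g_M}(f^{m_2}X)=f^{m_2}(\Div_{h_1}X + m_2 X(\ln f))$, and the extra $m_2 X(\ln f)$ piece is precisely what must cancel against the Hessian/$\|\nabla\ln f\|^2$ terms for the stated clean formula to emerge. So the real content is verifying that the leftover non-exact terms assemble into the coefficients $(m_1-n_1)(m_1-n_1-1)$ and $(m_2-n_2)(1-m_2+n_2)$; once that algebra is pinned down, the compactness-plus-Green's-theorem step is routine. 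I would also double-check the orientability/compactness hypothesis is used only to justify $\int_M \Div(\cdot)=0$, which it is.
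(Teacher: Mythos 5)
Your route diverges from the paper's and contains a genuine gap. The paper never passes through the Ricci tensor of $M$: it takes $S^{\mathcal{V}_1},S^{\mathcal{V}_2}$ to be the scalar curvatures of the \emph{vertical distributions}, i.e.\ double traces of $R^M$ over vertical directions only, and computes them pointwise from the Gauss-type equation $g_M(R^M(U,V)W,E)=g_M(\hat R(U,V)W,E)-g_M(T(U,E),T(V,W))+g_M(T(U,W),T(V,E))$ together with Proposition \ref{Curvature}. With totally umbilical fibres one has $T_1(e_i,e_k)=-\delta_{ik}\nabla^M\ln f$, so the $T$-terms contribute exactly $-(m_1-n_1)^2\|\nabla\ln f\|^2+(m_1-n_1)\|\nabla\ln f\|^2=-(m_1-n_1)(m_1-n_1-1)\|\nabla\ln f\|^2$, giving $S^{\mathcal{V}_1}=\hat S_1-(m_1-n_1)(m_1-n_1-1)\|\nabla\ln f\|^2$ as a \emph{pointwise} identity (similarly for $S^{\mathcal{V}_2}$, using that $\phi_2$ then has totally geodesic fibres plus the extra warped-product curvature term). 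The integration step is then immediate; no divergence, Hessian, or $A$-terms ever arise, and compactness is used only to make the integrals meaningful.

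By contrast, you define $S^{\mathcal{V}_1}=\sum_i\Ric(e_i,e_i)$ with $\Ric$ the full Ricci tensor of $M$, which also traces over horizontal directions; this is a different quantity, and it is precisely what drags the $H^f$, $\Div_{h_1}(\nabla^M\ln f)$ and $\sum_i g_{M_1}(A_1(e_a,e_i),A_1(e_a,e_i))$ terms into your computation. Your plan to dispose of them by Green's theorem cannot work as stated: the $A_1$-term is pointwise nonnegative and is not a divergence, and nothing in the hypotheses forces $A_1=0$, so it cannot cancel against the Hessian/divergence pieces in general. A further problem is that Theorem \ref{Ricci curv.} is established for Clairaut (isometric-on-horizontal) Riemannian warped product maps via Lemma 3.1 of \cite{sahinRmaps}, so its formulas cannot be imported verbatim into the conformal setting of the present theorem. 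The fix is to abandon the Ricci-trace route and work directly with the Gauss equation of the fibres, after which the stated coefficients fall out algebraically and the integral formula is trivial.
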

\begin{proof}
Let $\phi:(M, g_M)\rightarrow (N, g_N)$ be a conformal Riemannian map. Then by using the structural equations of Lemma \ref{Neill tensor}, the curvature tensor for vertical vector field is given by 
\begin{equation}\label{Riemann cur}
\begin{split}
g_M (R^M(U, V )W , E) = &g_M (\hat{R}(U, V )W , E) -  g_M \big(T(U, E), T(V, W)\big)\\&+ g_M \big(T(U, W), T(V, E)\big).
\end{split}
\end{equation}
Now, using Proposition \ref{Curvature}, Riemann curvature tensor is defined as
\begin{equation*}
\begin{split}
R(U_1, V_1, W_1, E_1) = & R^1(U_1, V_1, W_1, E_1).\\
\end{split}
\end{equation*}
Making use of \eqref{Riemann cur} in above equation, we obtain
\begin{align}
R(U_1, V_1, W_1, E_1) = &g_{M_1}(\hat{R}^1(U_1, V_1)W_1, E_1) -  g_{M_1}\big(T_1(U_1, E_1), T_1(V_1, W_1)\big)\nonumber\\&+ g_{M_1}\big(T_1(U_1, W_1), T_1(V_1, E_1)\big).
\end{align}
Again using Proposition \ref{Curvature} and \eqref{Riemann cur}, we obtain
\begin{equation}
\begin{split}
R(U_2, V_2, W_2, E_2) =&f^2\bigg(g_{M_2}(\hat{R}_2(U_2, V_2)W_2, E_2) -  g_{M_2}\big(T_2(U_2, E_2), T_2(V_2, W_2)\big)\\&+ g_{M_2}\big(T_2(U_2, W_2), T_2(V_2, E_2)\big)\bigg)+\frac{||\nabla f||^2}{f^2}\{g_M(V_2, E_2)\\&g_M(U_2, W_2)-g_M(V_2, W_2)g_M(U_2, E_2)\},
\end{split}
\end{equation}
where $U_i, V_i, W_i, E_i\in\Gamma(\mathcal{V}_i), 1\leq i\leq2.$
Therefore the scalar curvature of $S^{\mathcal{V}_1}, S^{\mathcal{V}_2}$ for conformal Riemannian warped product map are respectively given by
\begin{equation}
S^{\mathcal{V}_1} = \hat{S}_1 -g_{M_1}\big(T_1(e_i, e_i), T_1(e_k, e_k)\big)+g_{M_1}\big(T_1(e_i, e_k), T_1(e_k, e_i)\big),
\end{equation}
and
\begin{equation}
\begin{split}
S^{\mathcal{V}_2} = &f^2\bigg(\hat{S}_2 -g_{M_2}\big(T_2(\tilde{e_j}, \tilde{e_j}), T_2(\tilde{e_l}, \tilde{e_l})\big)+g_{M_2}\big(T_2(\tilde{e_j}, \tilde{e_l}), T_2(\tilde{e_l}, \tilde{e_j})\big)\bigg)\\&+||\grad f||^2f^2\{g_{M_2}(\tilde{e_l}, \tilde{e_j})g_{M_2}(\tilde{e_j}, \tilde{e_l})-g_{M_2}(\tilde{e_j}, \tilde{e_j})g_{M_2}(\tilde{e_l}, \tilde{e_l})\}.
\end{split}
\end{equation}
Since $\phi$ is a conformal Riemannian warped product map with totally umbilical fibres, $\phi_1$ is a conformal Riemannian map with totally umbilical fibres and $\phi_{2}$ is a conformal Riemannian map with totally geodesic fibres, above equations reduce to
\begin{equation}
S^{\mathcal{V}_1}= \hat{S}_1 -(m_1-n_1)(m_1-n_1-1)||\nabla^{M}\ln f||^2,
\end{equation}
and 
\begin{equation}
S^{\mathcal{V}_2} = f^2(\hat{S}_2+(m_2-n_2)(1-m_2+n_2)||\nabla f||^2).
\end{equation}
Integrating both side over $M,$ we obtain
\begin{equation}
\int_{M}^{}S^{\mathcal{V}_1} dv_{g_M} = \int_{M_1}^{}\Big(\hat{S}_1-(m_1-n_1)(m_1-n_1-1)||\nabla^M \ln f||^2\Big)dv_{g_{M_1}},
\end{equation}
and
\begin{equation}
\int_{M}^{}S^{\mathcal{V}_2}dv_{g_M} = \int_{M}^{}f^2(\hat{S}_2+(m_2-n_2)(1-m_2+n_2)||\nabla f||^2)dv_{g_M}.
\end{equation}
This completes the proof.
\end{proof}

\begin{example}
 Consider Riemannian manifolds $M_1 = R^2-\{(0, 0)\}$ with a Riemannian metric $g_{M_1} = e^{4}dx_1^2+e^{4}dx_2^2$ and $N_1= R^2$ with Euclidean metric $g_{N_1}.$ \\
 Now, we construct a map $\phi_{1}:(M_1, g_{M_1})\rightarrow (N_1, g_{N_1}),$
 defined as
 $$\phi_{1}(x_1, x_2) = \Big(\frac{x_1-x_2}{\sqrt{2}}, 0\Big).$$
 Then, $$ker\phi_{1*} = \text{span}~\{U_1 = e_1+e_2\},$$  $$(ker\phi_{1*})^\perp = \text{span}\{Y_1 = e_1-e_2\},$$
 $$range\phi_{1*}{Y_1} = span\{\sqrt{2}e^{-2}e_1'\},$$
 where $\{e_1 = e^{-2}\frac{\partial}{\partial x_1}, e_2 = e^{-2}\frac{\partial}{\partial x_2}\}$ and $\{e_1' = \frac{\partial}{\partial y_1}, e_2' = \frac{\partial}{\partial y_2}\}$ are the bases of $T_{p_1}M_1$ and $T_{\phi_{1}(p_1)}N_1,$ respectively. 
Clearly, $\phi_{1}$ is a conformal Riemannian map with dilation $\lambda_{1} = e^{-2}.$\\
 Further, we consider another pair of Riemannian manifolds $M_2 = R^2-\{(0, 0)\}$ with a Riemannian metric $g_{M_2} = e^{2}du_1^2+e^{2}du_2^2$ and $N_2= R^2$ with Euclidean metric $g_{N_2}$.
 Then, we construct a map $\phi_{2}:(M_2, g_{M_2})\rightarrow (N_2, g_{N_2}),$
 defined as
 $$\phi_{2}(u_1, u_2) = (\sin u_1, \cos u_1).$$
 Clearly, $$ker\phi_{2*} = span\{U_2 = E_2\},$$
$$(ker\phi_{2*})^\perp = span\{Y_2 = E_1\},$$
$$range\phi_{2*} = span\{\phi_{2*}{Y_2}= e^{-2}\cos u_1E_1'-e^{-2}\sin u_1E_2'\},$$
where $\{E_1 = e^{-1}\frac{\partial}{\partial u_1}, E_2= e^{-1}\frac{\partial}{\partial u_2}\}$ and $\{E_1' = \frac{\partial}{\partial v_1}, E_2' = \frac{\partial}{\partial v_2}\}$ are the bases of $T_{p_2}M_2$ and $T_{\phi_{2}(p_2)}N_2,$ respectively.
Then $\phi_{2}$ is a conformal Riemannian map with dilation $\lambda_{2} = e^{-2}.$\\
Now, we construct a map $\phi(=\phi_{1}\times\phi_{2}): M(=M_1\times_f M_2)\rightarrow N(=N_1\times_\rho N_2),$ defined as
 $$\phi(x_1, x_2, u_1, u_2) = \Big(\frac{x_1-x_2}{\sqrt{2}}, 0, \sin u_1, \cos u_1 \Big).$$  
Then, $\phi$ is a conformal Riemannian warped product map with dilation $\lambda = e^{-2}.$

\end{example}

\section{Conclusion}
In order to get a deep understanding of the geometry of warped product spaces, in this paper, we have introduced and studied the notion of Clairaut Riemannian warped product maps and conformal Riemannian warped product maps. The purpose to define Clairaut Riemannian warped product map is to find geodesics on warped product manifolds and discuss their possible applications. Further, we analyse the curvature relations on these maps. Geodesics and curvatures are important factors to understand the geometry of manifolds. We also explored the Bochner type formula in this context, which plays an important role to prove comparison theorems of Ricci curvature and scalar curvature. We examined how Ricci soliton structures behave on warped product manifolds and showed that under certain conditions, the fibers and image of these maps also carry similar geometric structures. In the conformal setting, we discussed conformal isometries, immersions, submersions and showed how changing the metric affects the geometry emphasising the role of O’Neill’s tensor. Additionally, we derived an integral formula for scalar curvature in the conformal case. This study lays the foundation for future work in related areas. Interesting directions include exploring Clairaut conformal warped product maps, statistical versions of these maps, and deeper connections between conformal geometry and warped products.

\section{Acknowledgement}
First author is grateful to the financial support provided by CSIR (Council
of science and industrial research) Delhi, India. File
no.[09/1051(12062)/2021-EMR-I]. The second author is thankful to UGC for providing financial assistance interms of the JRF scholarship vide NTA Ref. No.: 201610070797(CSIR-UGCNET June 2020). The corresponding author is thankful to the
Department of Science and Technology(DST) Government of India for providing
financial assistance in terms of FIST project(TPN-69301) vide the letter
with Ref No.:(SR/FST/MS-1/2021/104).\\

	\noindent J. Yadav, H. Kaur and G. Shanker\newline
Department of Mathematics and Statistics\newline
Central University of Punjab\newline
Bathinda, Punjab-151401, India.\newline
Email: sultaniya1402@gmail.com;\newline gauree.shanker@cup.edu.in;\newline harmandeepkaur1559@gmail.com\newline\\

\end{document}